\newtheorem{theorem}{Theorem}[section]
\newtheorem{proposition}[theorem]{Proposition}
\newtheorem{lemma}[theorem]{Lemma}
\theoremstyle{definition}
\newtheorem{definition}[theorem]{Definition}
\newtheorem{example}[theorem]{Example}
\theoremstyle{remark}
\newtheorem{remark}[theorem]{Remark}
\numberwithin{equation}{section}
\def\bZ{{\mathbb {Z}}}
\def\bR{{\mathbb {R}}}
\def\pB{{\mathcal B}}
\def\pD{{\mathcal D}}
\def\pE{{\mathcal E}}
\def\pF{{\mathcal F}}
\def\pG{{\mathcal G}}
\def\pP{{\mathcal P}}
\def\bd{\frak d}
\def\be{\frak e}
\def\bf{\frak f}
\def\bg{\frak g}
\def\VB{{\mathcal {VB}}}
\def\VCD{{\mathcal {VCD}}}
\def\Aut{{\rm Aut}}
\begin{document}

\title{Virtual braids and virtual curve diagrams}
\author{Oleg Chterental}

\maketitle
\begin{center}\today\end{center}

\begin{abstract}
There is a well known injective homomorhpism $\phi:\pB_n \rightarrow \Aut(F_n)$ from the classical braid group $\pB_n$ into the automorphism group of the free group $F_n$, first described by Artin \cite{A}. This homomorphism induces an action of $\pB_n$ on $F_n$ that can be recovered by considering the braid group as the mapping class group of $H_n$ (an upper half plane with $n$ punctures) acting naturally on the fundamental group of $H_n$.

Kauffman introduced virtual links \cite{Ka} as an extension of the classical notion of a link in $\bR^3$. There is a corresponding notion of a virtual braid. In this paper, we will generalize the above action to virtual braids. We will define a set, $\VCD_n$, of ``virtual curve diagrams" and define an action of $\VB_n$ on $\VCD_n$. Then, we will show that, as in Artin's case, the action is faithful. This provides a combinatorial solution to the word problem in $\VB_n$. 

In the papers \cite{B,M}, an extension $\psi:\VB_n\rightarrow \Aut(F_{n+1})$ of the Artin homomorphism was introduced, and the question of its injectivity was raised. We find that $\psi$ is not injective by exhibiting a non-trivial virtual braid in the kernel when $n=4$.
\end{abstract}

\section{Introduction}

Virtual knot theory was introduced by Kauffman in \cite{Ka}. It is an extension of classical knot theory in the sense that the set of isotopy classes of classical links embeds naturally into the set of virtual isotopy classes of virtual links. Virtual links have a topological interpretation as links in thickened surfaces up to the addition/removal of trivial handles and diffeomorphisms of the thickened surface \cite{CKS}. Such a representation of minimal genus is unique \cite{Ku}. Many classical link invariants such as the fundamental group of the complement, the Alexander and Jones polynomials, and Khovanov homology, have multiple extensions to virtual links. For further information and references regarding virtual knot theory, the reader may consult \cite{MI}.

There is a relationship between virtual links and closed virtual braids, via virtual versions of the Alexander and Markov theorems \cite{K, KL}. The set of virtual braids on $n$ strands forms a group, $\VB_n$, that has been studied from several angles \cite{BMVW, C, GP}.

While we are primarily interested in the virtual braid group $\VB_n$, we start with a brief review of the ordinary braid group and its ``Artin representation". Our work on $\VB_n$ is then a generalization of the classical case.

\subsection{Review of braids and the Artin representation}

Recall the braid group $\pB_n$ on $n$ strands. The standard presentation of $\pB_n$ has generators $\sigma_1, \ldots, \sigma_{n-1}$ and relations $\sigma_i \sigma_j = \sigma_j \sigma_i$ for $|i-j|>1$ and $\sigma_i \sigma_{i+1} \sigma_i=\sigma_{i+1} \sigma_i \sigma_{i+1}$ for $1 \leq i \leq n-2$. Braid words can be represented by braid diagrams. Each generator has a braid diagram as in Figure \ref{bdiag}. The diagram of a product $\alpha \beta$ is obtained by stacking the diagram of $\alpha$ on top of the diagram for $\beta$.

\begin{figure}
\begin{center}
\includegraphics[scale=1.5]{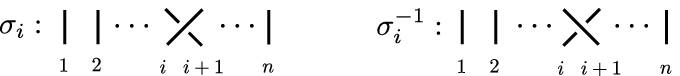}
\caption{Braid diagrams for the braid group generators.}\label{bdiag}
\end{center}
\end{figure}

Let $F_n$ be the free group on $n$ symbols $x_1,x_2, \ldots, x_n$. Artin \cite{A} studied a homomorphism $\phi$\index{Artin action} from $\pB_n$ to the automorphism group $\Aut(F_n)$ defined by \[ \phi(\sigma_i)(x_j)=\begin{cases}x_j & j \neq i,i+1 \\ x_ix_{i+1}x_i^{-1} & j=i \\ x_i & j=i+1 \end{cases}. \] We will alternatively refer to this homorphism as the \textbf{Artin action} or the Artin representation. If $\beta$ is a braid on $n$ strands then there is a permutation $\pi$ of $\{1,2,\ldots,n\}$ and an element $U_j \in F_n$ for each $1 \leq j \leq n$ such that $\phi(\beta)(x_j)=U_jx_{\pi(j)}U_j^{-1}$. The elements $U_j$ are uniquely determined up to multiplication on the right by $x_{\pi(j)}^{\pm 1}$. Thus the vector of free group cosets \[ (U_1 \langle x_{\pi(1)} \rangle, U_2 \langle x_{\pi(2)} \rangle, \ldots, U_n \langle x_{\pi(n)} \rangle), \] completely determines the map $\phi(\beta)$ on the free group. The cosets (and by abuse of terminology, the $U_j$'s) will be referred to as the \textbf{braid coordinates}\index{braid coordinates} of $\beta$.

\begin{figure}
\begin{center}
\includegraphics[scale=1.5]{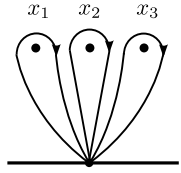}
\caption{The generators of the fundamental group of $H_3$.}\label{samp}
\end{center}
\end{figure}

\begin{figure}
\begin{center}
\includegraphics[scale=1]{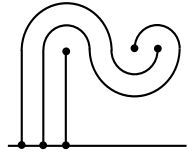}
\caption{The curve diagram of $\sigma_2^{-1} \sigma_1 \sigma_2 \in \pB_3$.}\label{exdiag}
\end{center}
\end{figure}

Let $H_n$ be the upper half-plane $\bR \times [0,\infty)$ with $n$ punctures lying on some horizontal line above the boundary. Choose a base point on the boundary. Consider a closed path starting at the base point and winding once clockwise around the $j^{\rm th}$ puncture from the left as in Figure \ref{samp}. By associating this path to the free group generator $x_j$, the fundamental group of $H_n$ can be identified with $F_n$. The coordinates of a braid can then be depicted as a collection of $n$ curves, each curve connecting the base point to one of the punctures in $H_n$.

The above approach to the Artin action has been used in \cite{Br} to give a normal form for braids, in \cite{DDRW} to give another derivation of the Dehornoy ordering of braids, and in \cite{DW} to give another polynomial time solution to the word problem in the braid group. It suffices to say that the curve diagram approach provides an interesting perspective on the braid group.

For example, Figure \ref{exdiag} shows the curve diagram of the braid $\sigma_2^{-1} \sigma_1 \sigma_2 \in \pB_3$. For the sake of neatness, we have $n$ different base points ordered from left to right. The curve starting at the $i^{\rm th}$ base point from the left represents the $i^{\rm th}$ braid coordinate. The curves are taken up to homotopy relative to the base points and puncture points. One verifies that the diagram in Figure \ref{exdiag} is correct by calculating the action of $\phi(\sigma_2^{-1} \sigma_1 \sigma_2)$ on the free group generators $x_1$, $x_2$ and $x_3$. The braid coordinates of $\sigma_2^{-1} \sigma_1 \sigma_2$ are \[ (x_1\langle x_3 \rangle, x_1x_3^{-1}\langle x_2\rangle ,\langle x_1\rangle),\] and this agrees with the diagram.

\begin{figure}
\begin{center}
\includegraphics[scale=0.85]{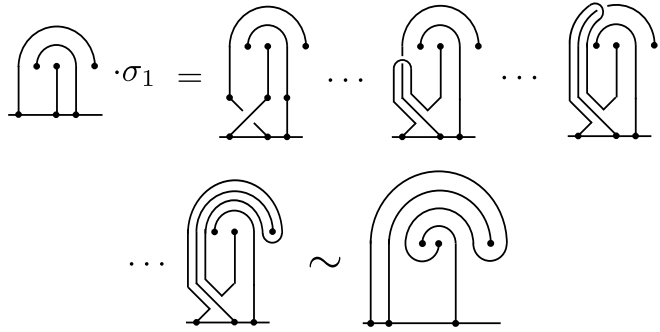}
\caption{The right action of $\pB_n$ on curve diagrams.}\label{rightpushaction}
\end{center}
\end{figure}

If the braid coordinates of $\beta$ are known, then one can determine the braid coordinates of $\sigma_i^{\pm 1} \beta$ and $\beta \sigma_i^{\pm 1}$. This induces both a left action and a right action of $\pB_n$ on curve diagrams with $n$ curves. Both actions can be described by a ``pushing off" procedure, for example as in Gaifullin-Manturov \cite{GM}. If $D$ is a curve diagram then the curve diagram $D \cdot \sigma_i^{\pm 1}$ is obtained by stacking $D$ on top of the braid diagram for $\sigma_i^{\pm 1}$ and pushing the upper strand off of the lower strand. This is shown in Figure \ref{rightpushaction}. The curve diagram $\sigma_i^{\pm 1} \cdot D$ is obtained by stacking the curve diagram for $\sigma_i^{\pm 1}$ on top of the curve diagram $D$ and pushing off the curves. The stacking is done in such a way that the bottom endpoints of the diagram of $\sigma_i^{\pm 1}$ are extended below the curves of $D$ to connect to the puncture points of $D$. This is shown in Figure \ref{leftpushaction}.

\begin{figure}
\begin{center}
\includegraphics[scale=0.65]{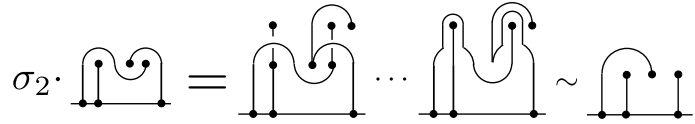}
\caption{The left action of $\pB_n$ on curve diagrams.}\label{leftpushaction}
\end{center}
\end{figure}

In Figure \ref{pushing}, the pushing off procedure is used to compute the right action of the braid word $\sigma_2^{-1} \sigma_1 \sigma_2$ on $I_3$, the trivial curve diagram with $3$ curves. The procedure returns the curve diagram in Figure \ref{exdiag}. This is expected, since for any braid $\beta$ we have $\beta \cdot I_n=I_n \cdot \beta$ and both are equal to the curve diagram for that braid. Artin proved that $\phi$ is injective, so a braid is determined by its braid coordinates or equivalently by its curve diagram.

\begin{figure}
\begin{center}
\includegraphics[scale=0.65]{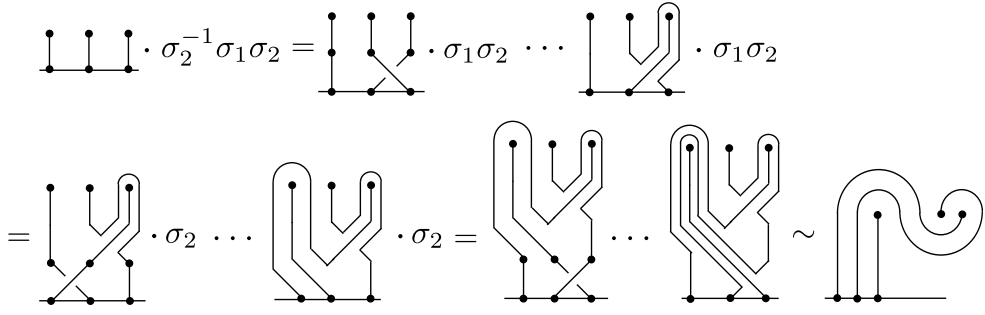}
\caption{The right action of $\sigma_2^{-1} \sigma_1 \sigma_2 \in pB_3$ on $I_3$, the trivial curve diagram with three curves.}\label{pushing}
\end{center}
\end{figure}

\subsection{Motivating virtual curve diagrams for virtual braids}

All of this can be repeated, with some modifications, in the setting of virtual braids. Let $\VB_n$ be the virtual braid group on $n$ strands. This group has a presentation with generators $\sigma_1, \ldots, \sigma_{n-1}$ and $\tau_1, \ldots, \tau_{n-1}$. The generators $\sigma_1, \ldots, \sigma_{n-1}$ satisfy the ordinary braid group relations. The generators $\tau_1, \ldots, \tau_{n-1}$ generate the symmetric group on $n$ symbols and satisfy the relations $\tau_i \tau_j = \tau_j \tau_i$ for $|i-j|>1$ and $\tau_i \tau_{i+1} \tau_i=\tau_{i+1} \tau_i \tau_{i+1}$ for $1 \leq i \leq n-2$ and $\tau_i^2=1$ for $1 \leq i \leq n-1$. Additionally, there are mixed relations $\sigma_i \tau_j = \tau_j \sigma_i$ for $|i-j|>1$ and $\tau_{i+1} \sigma_i \tau_{i+1} = \tau_i \sigma_{i+1} \tau_i$ for $1 \leq i \leq n-2$. Virtual braid words can be represented by virtual braid diagrams. The $\tau_i$ generator has a virtual braid diagram as in Figure \ref{vbdiag}. As in the braid case, the diagram of a product $\alpha \beta$ is obtained by stacking the diagram of $\alpha$ on top of the diagram for $\beta$.

\begin{figure}
\begin{center}
\includegraphics[scale=1]{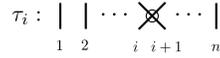}
\caption{The virtual braid diagram for the virtual crossing $\tau_i$.}\label{vbdiag}
\end{center}
\end{figure}

Before describing the action we will ultimately adopt (see Section \ref{vcsec}), we start with a naive extension of the Artin representation to the virtual case. Define a homomorphism $\psi:\VB_n \rightarrow \Aut(F_n)$ by the formula $\psi(\sigma_i)=\phi(\sigma_i)$ and \[ \psi(\tau_i)(x_j)=\begin{cases}x_j & j \neq i,i+1 \\ x_{i+1} & j=i \\ x_i & j=i+1 \end{cases}. \] For any virtual braid $\beta$, there is a permutation $\pi$ of $\{1,2,\ldots,n\}$ and an element $U_j \in F_n$ for each $1 \leq j \leq n$ such that $\psi(\beta)(x_j)=U_jx_{\pi(j)}U_j^{-1}$. The elements $U_j$ are uniquely determined up to multiplication on the right by $x_{\pi(j)}^{\pm 1}$. To any virtual braid $\beta \in \VB_n$ we assign the $n$-tuple of free group cosets \[ (U_1 \langle x_{\pi(1)} \rangle, U_2 \langle x_{\pi(2)} \rangle, \ldots, U_n \langle x_{\pi(n)} \rangle), \] which we will again refer to as the coordinates of $\beta$. The coordinates of a virtual braid may be represented as curves in a punctured half plane in the same way as the coordinates of an ordinary braid. The curves are taken up to homotopy fixing the base points and the punctures. For example, the coordinates of the two virtual braids $\tau_2 \sigma_1 \sigma_2$ and $\sigma_1\sigma_2\tau_1$ in $\VB_3$ are \[(x_1 \langle x_3 \rangle,x_1 \langle x_2 \rangle,\langle x_1\rangle ), \] and the corresponding curve diagram is shown in Figure \ref{vex} (up to a homotopy of the curves). As we will see, these two virtual braids are distinct and $\psi$ is not injective. In fact, it is known that the kernel of $\psi$ is generated by $\tau_2 \sigma_1 \sigma_2 (\sigma_1\sigma_2\tau_1)^{-1}$ (in the $n=3$ case). For larger $n$, it is generated by all braids obtained by adding trivial strands to the left and right of this braid. The quotient $\VB_n / \ker \psi$ is known by several names: the welded braid group, the group of basis-conjugating autmorphisms of $F_n$, and the loop braid group. See \cite{BD} for more information and references regarding welded braids.

In \cite{B,M}, another more intricate extension $\psi:\VB_n \rightarrow \Aut(F_{n+1})$ of the Artin representation was given, mapping a virtual braid on $n$ strands to an automorphism of the free group on $n+1$ generators $x_1,x_2,\ldots,x_n,q$. Up till now, the injectivity of this $\psi$ has been open. However, we will show in Section \ref{fut} that this representation has a non-trivial kernel as well.

\begin{figure}
\begin{center}
\includegraphics[scale=1]{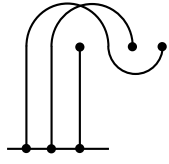}
\caption{The curve diagram of $\tau_2 \sigma_1 \sigma_2$ and $\sigma_1\sigma_2\tau_1$ in $\VB_3$.}\label{vex}
\end{center}
\end{figure}

In search of a virtual braid invariant that is stronger than $\psi$, we alter the definition of a curve diagram. Recall that the $n$ punctures of $H_n$ lie on a horizontal line above the boundary. We call this line the upper line. The different ways that a homotopy of the curves in a curve diagram can interact with the upper line are shown in Figure \ref{homotopies}. By restricting the allowed homotopies to exclude the homotopy labelled $F$, one gets a finer equivalence relation on curve diagrams. The $F$ stands for forbidden\index{forbidden $F$-move}, in reference to the forbidden moves of virtual knot theory. These finer equivalence classes of curve diagrams will be called \textbf{virtual curve diagrams}\index{virtual curve diagram} (see Section \ref{vcsec} for a more detailed definition). We denote the set of virtual curve diagrams with $n$ curves by $\VCD_n$.

\begin{figure}
\begin{center}
\includegraphics[scale=0.65]{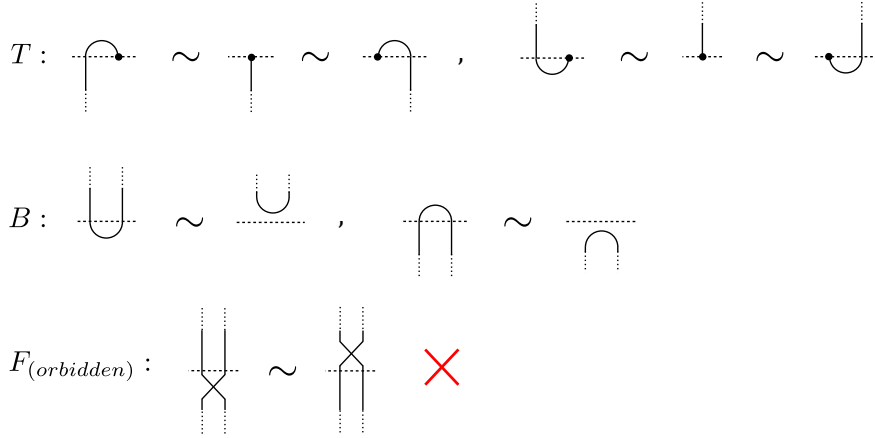}
\caption{Homotopies of curve diagrams interacting with the upper line.}\label{homotopies}
\end{center}
\end{figure}

There are well-defined left and right actions of $\VB_n$ on $\VCD_n$ via the pushing off procedure. Returning to the example of $\tau_2 \sigma_1 \sigma_2$ and $\sigma_1\sigma_2\tau_1$ in $\VB_3$, Figure \ref{wmove} shows that the right action of both braids on $I_3$ yields distinct virtual curve diagrams, thanks to the absence of the forbidden homotopy $F$.

\begin{figure}
\begin{center}
\includegraphics[scale=0.75]{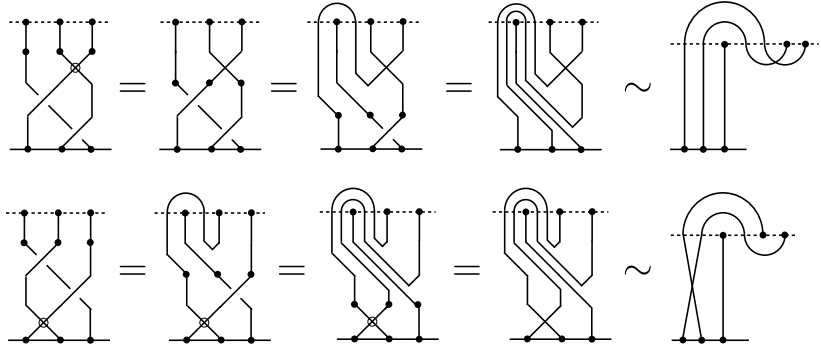}
\caption{The right action of $\tau_2 \sigma_1 \sigma_2,\sigma_1\sigma_2\tau_1 \in \VB_3$ on $I_3$, yields distinct virtual curve diagrams.}\label{wmove}
\end{center}
\end{figure}

We will show that the map from $\VB_n$ to $\VCD_n$ given by $\beta \mapsto \beta \cdot  I_n$ is injective ($I_n$ is the trivial virtual curve diagram with $n$ curves). Virtual curve diagrams are easy to tell apart, hence we solve the word problem for $\VB_n$. Given a braid, the corresponding curve diagram can be calculated in polynomial time in the length of the braid. This fact is briefly explained in \cite{DW} (though in that reference, the real focus is in the opposite direction: extracting a braid word from a curve diagram in polynomial time). It seems likely that in a similar manner virtual curve diagrams can be calculated quickly.

We note that the word problem for $\VB_n$ has already been solved in \cite{GP}. The methods there apply to a general class of Artin-Tits groups that includes $\VB_n$, and additionally it is shown that $\VB_n$ has virtual cohomological dimension $n-1$. There is as well a geometric/topological approach to virtual braids initiated in \cite{C}.

In Section \ref{vcsec} we introduce virtual curve diagrams, give an example, and define equivalence of virtual curves diagrams. In Section \ref{vaction} we describe the left action of $\VB_n$ on $\VCD_n$ and show that it is well-defined. In Section \ref{inj} we prove that the left action of $\VB_n$ on $\VCD_n$ is faithful. In Section \ref{fut} we describe an element of the kernel of the map $\psi$ of Bardakov and Manturov.

\section*{Acknowledgements}
I'd like to thank my Ph.D. advisor Dror Bar-Natan for his support during the course of my degree and feedback during the writing of this paper.

I also wish to thank Valeriy G. Bardakov, Paolo Bellingeri, Hans U. Boden, Bruno A. Cisneros De La Cruz, Louis H. Kauffman, Vassily Manturov and Emmanuel Wagner for several helpful discussions and comments.

\section{Virtual curve diagrams}\label{vcsec}

\subsection{A formal definition and an example}

For the following definition, it may be helpful to refer to Example \ref{ex1} along with Figure \ref{exconv}.

\begin{figure}
\begin{center}
\includegraphics[scale=1.5]{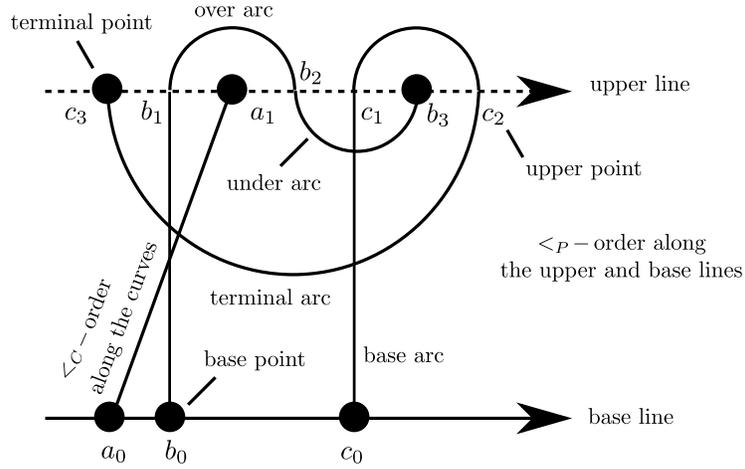}
\caption{A vcd with $3$ curves. See Definition \ref{vcd} and Example \ref{ex1}. The diagram consists of $7$ upper points, $3$ base points, $3$ terminal points, $3$ base arcs, $2$ over arcs, $2$ under arcs and $3$ terminal arcs.}\label{exconv}
\end{center}
\end{figure}

\begin{definition}\label{vcd} A \textbf{virtual curve diagram}\index{virtual curve diagram} (or vcd) $D$ with $n$ curves is a tuple $(P,<_P,<_C)$ where:
\begin{itemize}
\item $P$ is a finite set whose elements are called \textbf{points}\index{point}.
\item The pair $(P,<_P)$\index{$<_P$-order} is a partially ordered set isomorphic to two disjoint chains $U$ and $B$. The elements of $U$ are \textbf{upper points}\index{point!upper} and the elements of $B$ are \textbf{base points}\index{point!base}.
\item The pair $(P, <_C)$\index{$<_C$-order} is a partially ordered set that is isomorphic to $n=|B|$ disjoint chains called \textbf{curves}\index{curve}. Every curve's minimal point is a base point and every curve ends at an upper point, the \textbf{terminal point}\index{point!terminal}. Pairs of points $(a,b)$ where $b$ $<_C$-covers $a$ will be called \textbf{arcs}\index{arc}. Note that since terminal points are not base points, every curve contains at least two points.
\item Assume some curve consists of the points $a_0 <_C a_1 <_C \ldots <_C a_r$, where $a_0$ is a base point and $a_r$ is a terminal point. The arc $(a_0,a_1)$ is the \textbf{base arc}\index{arc!base}. For $k \geq 1$, any arc of the form $(a_{2k-1},a_{2k})$ is an \textbf{over arc}\index{arc!over} and any arc of the form $(a_{2k},a_{2k+1})$ is an \textbf{under arc}\index{arc!under}. The arc $(a_{r-1},a_r)$ is the \textbf{terminal arc}\index{arc!terminal}, and it may happen that the terminal arc is a base arc, or an over arc, or an under arc. If $(x,y)$ and $(z,w)$ are any two over arcs (potentially from different curves) then \[\min_P\{x,y\} <_P \min_P\{z,w\} <_P \max_P\{x,y\} <_P \max_P\{z,w\}, \] must not occur, as shown in Figure \ref{avoid}. Note that the $\min$ and $\max$ functions are taken with respect to the $<_P$-order.

\begin{figure}
\begin{center}
\includegraphics[scale=1]{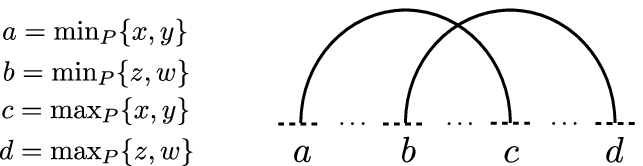}
\caption{The configuration of over arcs $(x,y)$, $(z,w)$ that may not occur in a virtual curve diagram.}\label{avoid}
\end{center}
\end{figure}

\end{itemize}
\end{definition}

\begin{remark}\label{conv}
We draw curve diagrams with a few conventions. The upper and base points lie on two horizontal lines oriented left to right. The upper line is called the \textbf{upper line}\index{line!upper} and contains the upper points and the lower line is called the \textbf{base line}\index{line!base} and contains the base points. The upper line is dashed. The $<_P$-order agrees with the order of the points along the oriented lines. The over and under arcs of $<_C$ are represented by arcs connecting the corresponding points. Base arcs will be represented by line segments connecting the base point to the corresponding upper point. Base points and terminal points will be emphasized with big dots. Note that in Definition \ref{vcd}, the inequality condition on over arcs is equivalent to the curves not intersecting above the upper line. Figure \ref{exconv} depicts the curve diagram of Example \ref{ex1}.
\end{remark}

\begin{example}\label{ex1}
Let $U$ be the set $\{a_1,b_1,b_2,b_3,c_1,c_2,c_3\}$ and $B$ the set $\{a_0,b_0,c_0\}$. Let $<_C$ be given by \[ a_0 <_C a_1 {\rm \ and \ } b_0 <_C b_1 <_C b_2 <_C b_3 {\rm \ and \ } c_0 <_C c_1 <_C c_2 <_C c_3,\] and let $<_P$ be given by $a_0<_P b_0<_P c_0$ and \[ c_3 <_P b_1 <_P a_1 <_P b_2 <_P c_1 <_P b_3 <_P c_2. \] The over arcs are $(b_1,b_2)$ and $(c_1,c_2)$ and one can see that the condition on over arcs is satisfied, so this is indeed a virtual curve diagram. This example is shown in Figure \ref{exconv}, following the conventions in Remark \ref{conv}.
\end{example}

\begin{example}
The trivial curve diagram $I_n$\index{virtual curve diagram!trivial $I_n$} is the vcd given by $B=\{b_1,b_2, \ldots, b_n\}$, $U=\{t_1,t_2,\ldots, t_n\}$, $b_i <_C t_i$ for $1 \leq i \leq n$, and \[ t_1 <_P t_2 <_P \ldots <_P t_n, \] and \[ b_1 <_P b_2 <_P \ldots <_P b_n. \]
\end{example}

\subsection{An equivalence relation}

We now define an equivalence relation $\sim$ on $\VCD_n$. This equivalence permits the moves $T$ and $B$ described in the introduction, as well as relabellings of the points of a curve diagram. If $x$ and $y$ are points, we say they are \textbf{adjacent} if there are no points strictly between them in the $<_P$ order.

\begin{definition}\label{equi}
Refer to Figure \ref{moves}. Let $D=(P,<_P,<_C)$ and $E=(Q,<_Q,<_R)$ be two vcds. Then $D$ and $E$ are related by a relabelling of their points if there exists a bijection $P \rightarrow Q$ that induces order isomorphisms $(P,<_P) \rightarrow (Q,<_Q)$ and $(P,<_C) \rightarrow (Q,<_R)$. 

We say $D$ and $E$ are related by a \textbf{$T$-move}\index{$T$-move}, if there exists a curve in $D$ containing a terminal arc $(a,b)$ where $a$ and $b$ are adjacent, and $Q=P \setminus \{b\}$, and the orders $<_Q$ and $<_R$ are the restrictions of $<_P$ and $<_C$ to $Q$. In this case, the point $a$ is a terminal point in $E$.

\begin{figure}
\begin{center}
\includegraphics[scale=.75]{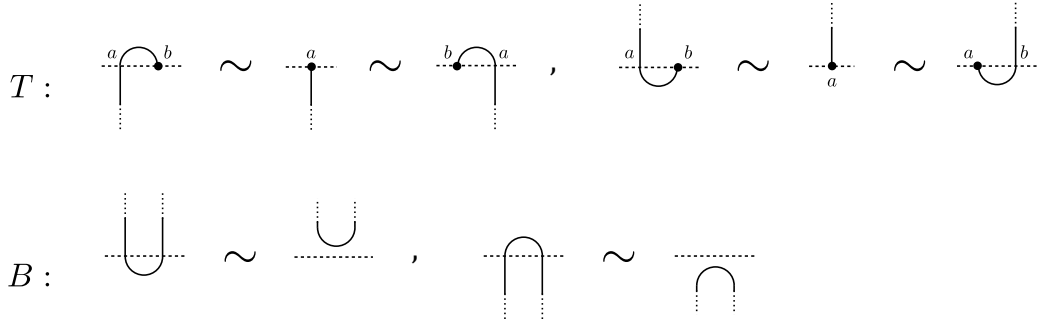}
\caption{The $T$ and $B$-moves.}\label{moves}
\end{center}
\end{figure}

We say $D$ and $E$ are related by a \textbf{$B$-move}\index{$B$-move}, if there exists a curve in $D$ containing three $<_C$-consecutive arcs $(a,b),(b,c),(c,d)$ where $b$ and $c$ are adjacent, and $Q=P \setminus \{b,c\}$, and the orders $<_Q$ and $<_R$ are the restrictions of $<_P$ and $<_C$ to $Q$.

If there exists a sequence $D_1, D_2, \ldots, D_n$ of vcds such that $D_k$ is related to $D_{k+1}$ by a relabelling or a $T$ or $B$-move for each $1 \leq k < n$, then we say $D_1$ is \textbf{equivalent}\index{equivalent vcds} to $D_n$, and write $D_1 \sim D_n$.
\end{definition}

\begin{remark}
Any diagram is equivalent to a unique diagram with a minimal amount of points. Such a diagram has no $T$ or $B$-moves possible which decrease the number of points. If a diagram is the minimal representative of its $\sim$-equivalence class, we will say that the diagram is \textbf{simplified}\index{simplified vcd}. Figure \ref{equiv} gives a pair of equivalent vcds, one of which is simplified.
\end{remark}

\begin{figure}
\begin{center}
\includegraphics[scale=.75]{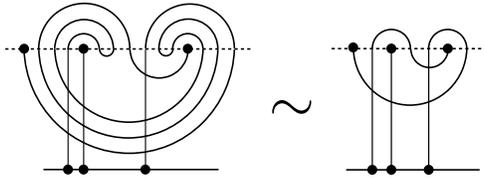}
\caption{A pair of equivalent vcds, one of which is simplified.}\label{equiv}
\end{center}
\end{figure}

\section{The action of virtual braids on virtual curve diagrams}\label{vaction}

In the introduction we claimed there are left and right actions of $\VB_n$ on $\VCD_n$. From now on we will only be interested in the left action. In this section we will clarify the action and prove some general facts.

\subsection{Defining the action and checking its well-definedness}

\begin{figure}
\begin{center}
\includegraphics[scale=.75]{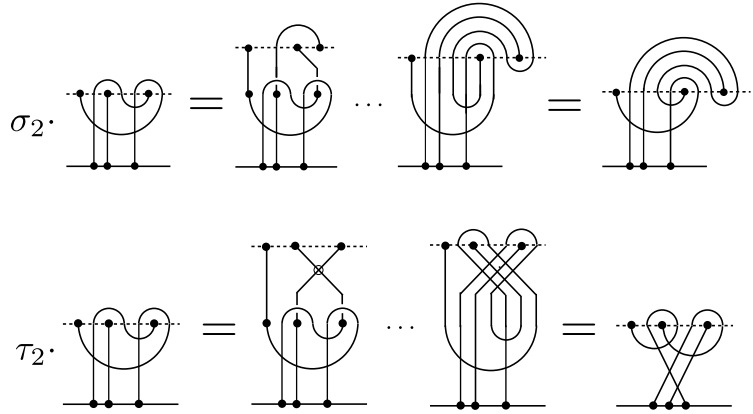}
\caption{Two examples of the left action.}\label{actexam}
\end{center}
\end{figure}

In the next section we will prove that the left action\index{left action on vcds} is faithful. To check that the action is well-defined, it is only necessary to check that $R_1 \cdot D = R_2 \cdot D$ for any $D \in \VCD_n$ and any defining relation $R_1=R_2$ in the presentation for $\VB_n$.

Figure \ref{welld} shows how one would prove this equality for the easiest relation $\sigma_1 \cdot (\sigma_1^{-1} \cdot D)=D$. We have simplified the situation in that the generic diagram $D$ could have many more over arcs than depicted in Figure \ref{welld}, however this fact won't change the overall proof. Note that the contents of the box labelled $D$ are irrelevant to the proof as well. In a similar way one would prove the relation $\tau_1 \cdot (\sigma_2 \cdot (\tau_1 \cdot D))=\tau_2 \cdot (\sigma_1 \cdot (\tau_2 \cdot D))$ as in Figure \ref{welld3}. Finally, Figure \ref{R3} shows the Reidemeister three relation $\sigma_1 \cdot (\sigma_2 \cdot (\sigma_1 \cdot D))=\sigma_2 \cdot (\sigma_1 \cdot (\sigma_2 \cdot D))$.

\begin{figure}
\begin{center}
\includegraphics[scale=.75]{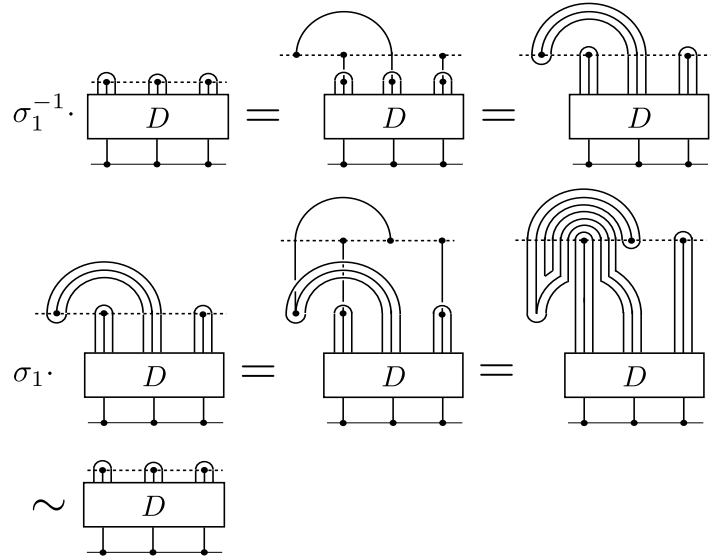}
\caption{The relation $\sigma_1 \cdot (\sigma_1^{-1} \cdot D) = D$ holds.}\label{welld}
\end{center}
\end{figure}

\begin{figure}
\begin{center}
\includegraphics[scale=.70]{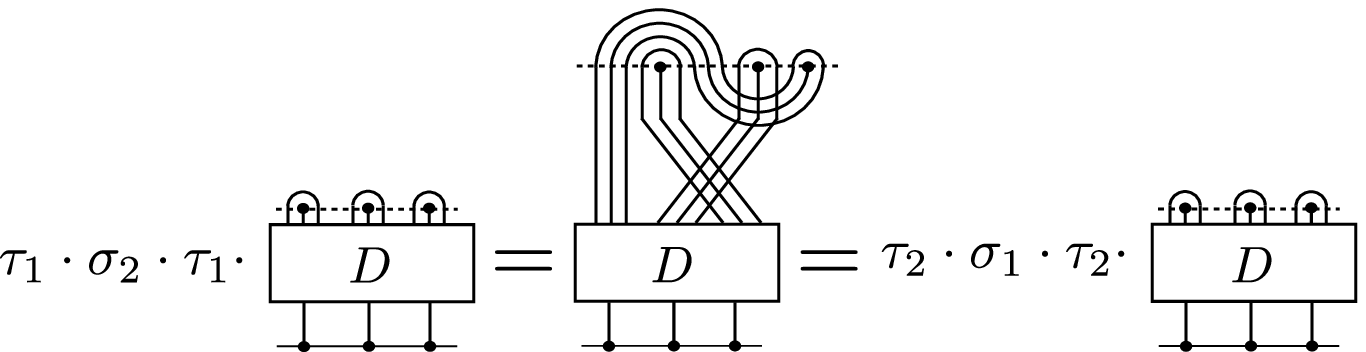}
\caption{The relation $\tau_1 \cdot (\sigma_2 \cdot (\tau_1 \cdot D))=\tau_2 \cdot (\sigma_1 \cdot (\tau_2 \cdot D))$ holds.}\label{welld3}
\end{center}
\end{figure}

\begin{figure}
\begin{center}
\includegraphics[scale=.70]{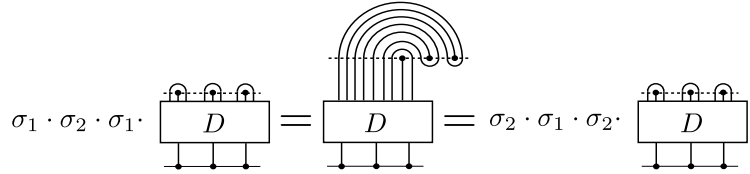}
\caption{The relation $\sigma_1 \cdot (\sigma_2 \cdot (\sigma_1 \cdot D))=\sigma_2 \cdot (\sigma_1 \cdot (\sigma_2 \cdot D))$ holds.}\label{R3}
\end{center}
\end{figure}

\subsection{Some properties of the action}

Let $D \in \VCD_n$ be a simplified virtual curve diagram. If $(a,b)$ is an over or under arc and $c$ is an upper point such that $a<_P c<_P b$ or $b<_P c<_P a$ then the arc $(a,b)$ \textbf{encloses}\index{enclose} the point $c$. If $(a,b)$ encloses both points of some arc $(c,d)$ then $(a,b)$ will be said to enclose $(c,d)$.

Assume that the terminal points of the curves in $D$ are \[t_1 <_P t_2 <_P \ldots <_P t_n.\] If a terminal over arc terminates at the terminal point $t_{i+1}$ and encloses $t_i$ we will say the terminal over arc is of \textbf{type} $(i,i+1)$\index{arc!type $(i,i+1)$,$(i+1,i)$} and if it terminates at $t_i$ and encloses $t_{i+1}$ we will say it is of type $(i+1,i)$. Any terminal over arc is of one of these types for some $i$. This is depicted in Figure \ref{toas}. Note that Figure \ref{exconv} in Section \ref{vcsec} depicts a vcd with no terminal over arcs. Note also that in any vcd with terminal over arcs as in Figure \ref{toas}, there certainly may be other arcs between $t_i$ and $t_{i+1}$, though they are not drawn in the figure.

\begin{figure}
\begin{center}
\includegraphics[scale=1.2]{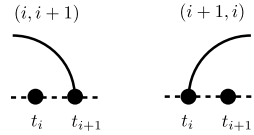}
\caption{The two types of terminal over arcs.}\label{toas}
\end{center}
\end{figure}

\begin{remark}\label{notoas}
Note that the curve diagram of a non-trivial braid $\beta \in \pB_n$ always has a terminal over arc. Indeed consider a simplified curve diagram that has no terminal over arcs. If it has only terminal base arcs it is the trivial diagram. If it has a terminal under arc, then consider an inner-most terminal under arc, i.e. a terminal under arc that encloses no terminal points. Then one sees that the diagram is not simplified as there will be a simplifying $T$ or $B$-move.
\end{remark}

Two under arcs $(a,b)$ and $(c,d)$ are said to \textbf{cross}\index{arc!crossing}, if $(a,b)$ encloses one of the points $c$ or $d$ but not both. Similarly an under arc $(a,b)$ and a base arc $(c,d)$ are said to cross if $(a,b)$ encloses $d$. Finally two base arcs $(a,b)$ and $(c,d)$ cross if $a<_P c$ and $d <_P b$. An under arc is said to be \textbf{free}\index{arc!free} if no other arcs cross it and it encloses no pair of crossing under arcs. 

\begin{proposition}\label{tbraid}
Given a simplified vcd $D$, there is a unique braid $\beta \in \pB_n$ such that the simplified representative of $\beta^{-1} \cdot D$ has no terminal over arcs.
\end{proposition}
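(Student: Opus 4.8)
The plan is to extract the braid $\beta$ directly from the combinatorics of $D$, by reading off a sequence of terminal over arcs and "peeling them away" one at a time. First I would observe that if $D$ has a terminal over arc, it is of type $(i,i+1)$ or $(i+1,i)$ for a unique $i$ (by the discussion preceding the proposition), and I claim that applying $\sigma_i^{-1}$ (resp. $\sigma_i$) to $D$ strictly decreases the total number of points in the simplified representative: the pushing-off procedure for $\sigma_i^{\pm 1}$ has the effect, on a diagram whose terminal arc near $t_i, t_{i+1}$ is such an over arc, of cancelling that over arc via a $T$- or $B$-move. This requires checking, case by case on the two types and the two signs, that exactly the right over arc is removed and no new terminal over arc of larger "complexity" is created — this is really the same local picture as in Figures \ref{rightpushaction}–\ref{leftpushaction} read in reverse. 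Iterating, after finitely many steps we reach a simplified vcd with no terminal over arcs, and the product of the chosen generators (in order) is the desired $\beta$; existence follows.

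For uniqueness, suppose $\beta_1, \beta_2 \in \pB_n$ both have the property that $\beta_1^{-1}\cdot D$ and $\beta_2^{-1}\cdot D$ simplify to vcds with no terminal over arcs. Set $\gamma = \beta_2^{-1}\beta_1 \in \pB_n$. Then $\gamma \cdot (\beta_1^{-1}\cdot D) = \beta_2^{-1}\cdot D$, so $\gamma$ carries one no-terminal-over-arc vcd to another. I would then argue that the only braid that sends a vcd with no terminal over arcs to another such vcd is the identity. The key input is Remark \ref{notoas}: a no-terminal-over-arc simplified vcd is, in a precise sense, "braid-trivial" in its terminal structure, and the braid coordinates of $\gamma\cdot E$ are obtained from those of $E$ by the pushing-off procedure; if $\gamma \neq 1$, its own curve diagram $\gamma \cdot I_n$ has a terminal over arc (Remark \ref{notoas} again), and one shows this forces a terminal over arc to appear in $\gamma \cdot E$ as well — because the outermost terminal over arc of $\gamma$ cannot be cancelled by the bounded "height" of the terminal arcs of $E$. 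Making this last comparison precise is the step I expect to be the main obstacle: one needs a monotonicity or "no unexpected cancellation" statement saying that composing with a nontrivial braid cannot destroy all terminal over arcs when none are present to begin with. I would isolate this as a lemma about how the left action interacts with terminal over arcs, proved by induction on the length of $\gamma$ together with the enclosure/crossing bookkeeping set up just before the proposition.

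A cleaner alternative for uniqueness, which I would pursue in parallel, is to use faithfulness of the classical Artin action together with the observation that the "terminal over arc" data is exactly what distinguishes the braid part: define a retraction-like map that, given a simplified vcd $E$ with no terminal over arcs, shows $\gamma \cdot E \sim E$ implies $\gamma = 1$ by looking at the induced action on the subword of braid coordinates coming from the terminal arcs. Either way, the core difficulty is the same — controlling terminal over arcs under the left action — and once that lemma is in hand both existence and uniqueness follow by the peeling argument above.
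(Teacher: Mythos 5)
Your existence argument is sound and is essentially the reduction machinery the paper develops later (Definition \ref{reddef} and Remark \ref{compl}): peel terminal over arcs one at a time, with termination guaranteed by a strictly decreasing non-negative complexity. (One caution: the quantity that provably decreases is the count of over-arc/terminal-point enclosures, not necessarily the total number of points, but this does not affect the argument.) However, your uniqueness half has a genuine gap, and you flag it yourself: everything hinges on the lemma that a nontrivial classical braid $\gamma$ cannot carry a simplified vcd with no terminal over arcs to another such vcd, and you do not prove it. A naive induction on the length of $\gamma$ runs into the problem that intermediate diagrams can acquire and then lose terminal over arcs, so "no unexpected cancellation" is exactly the hard content, not a bookkeeping afterthought; the paper only obtains the analogous global statement (for all of $\VB_n$) via the full diamond-lemma apparatus of Section \ref{inj}, which itself quotes Proposition \ref{tbraid}. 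So as written your proposal is circular-adjacent: the missing lemma is comparable in difficulty to the proposition itself.

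The paper's own proof avoids this by a structural observation rather than an iterative one: in any simplified vcd, the collection of all over arcs together with the free under arcs forms, on its own, the ordinary curve diagram of a classical braid $\beta$ "sitting on top" of the rest of the diagram (Figure \ref{topbraid}), with the remaining curves merely dragged along. This identifies $\beta = t(D)$ canonically from $D$, gives $\beta^{-1}\cdot D$ with no terminal over arcs at once, and reduces uniqueness to the combination of Remark \ref{notoas} (a nontrivial classical braid's curve diagram always has a terminal over arc) with Artin's faithfulness theorem, since any competitor $\beta'$ would force the curve diagram of $\beta'^{-1}\beta$ to have no terminal over arcs. If you want to salvage your route, the thing to prove is precisely this factorization of a simplified vcd into a top classical curve diagram over a terminal-over-arc-free remainder; once you have it, both your peeling argument and your uniqueness claim follow, but without it the proof is incomplete.
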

\begin{proof}
Sitting on ``top" of any simplified vcd is the simplified curve diagram of a braid $\beta \in \pB_n$, along with some superfluous curves thrown in. This curve diagram contains all over arcs and free under arcs of $D$. Figure \ref{topbraid} gives an example of such a factorization. It follows that $\beta^{-1} \cdot D$ has no top braid and hence no terminal over arcs. Note that $\beta^{-1} \cdot D$ will still have non-terminal over arcs in general. We will denote $\beta$ by $t(D)$\index{top braid $t(D)$}.
\end{proof}

\begin{figure}
\begin{center}
\includegraphics[scale=.9]{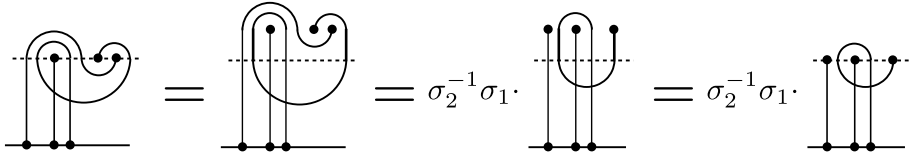}
\caption{A vcd with top braid $\sigma_2^{-1} \sigma_1$.}\label{topbraid}
\end{center}
\end{figure}

Based on the above proposition, an ordinary braid generator $\sigma_i$ acts on a vcd by acting on the curve diagram of the top braid, as well as any other superfluous curves in the way. For example, in the first row of Figure \ref{actexam}, the second puncture effectively travelled over and to the right of the third puncture, pulling any curves in the way along with it.

Now let's examine the action of permutations. We have already described how a transposition $\tau_i$ acts on a vcd. The following example describes the action of a more general permutation on a simplified vcd. 
 
\begin{example}\label{pexam}
Assume $\pi$ is a permutation and there exists some $1 \leq i,k$ with $i+k \leq n$ and $\pi(i+j)=\pi(i)+j$ for $1 \leq j \leq k$. That is, $\pi$ translates the interval $[i,i+k]$ to the interval $[\pi(i),\pi(i)+k]$ and is otherwise unrestricted. Let $D$ be a simplified vcd. Consider the region of $D$ above the upper line, and between the terminal points $t_i$ and $t_{i+k}$. This region will completely agree with the region above the upper line and between the terminal points $t_{\pi(i)}$ and $t_{\pi(i)+k}$ in the simplified representative of $\pi \cdot D$. This fact more or less completely determines how to figure out the action of a general permutation (aside from keeping track of the under arcs), since any permutation has a unique decomposition into such translations that are maximal. Consider the following example of the above description.

\begin{figure}
\begin{center}
\includegraphics[scale=1]{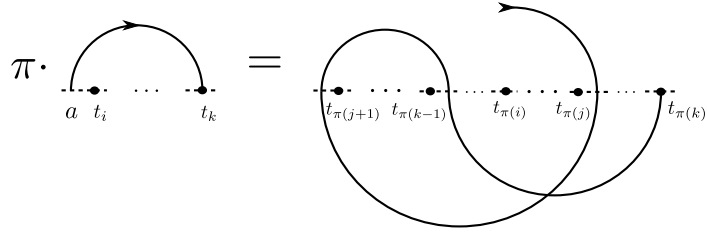}
\caption{A permutation acting on a terminal over arc $(a,t_k)$ with $a <_P t_k$.}\label{overarc2}
\end{center}
\end{figure}
\end{example} 

Suppose as in Figure \ref{overarc2}, a simplified vcd $D$ contains a terminal over arc $(a,t_k)$ (with $a<_P t_k$) enclosing the terminal points $t_i$ to $t_{k-1}$. Suppose for $i\leq j < k-1$ that $\pi$ maps the closed interval $[i,j]$ to $[\pi(i),\pi(j)]$ and $[j+1,k-1]$ to $[\pi(j+1),\pi(k-1)]$, whilst preserving the order and length of the intervals. Assume also that $\pi(k-1)<\pi(i)$ and $\pi(j)<\pi(k)$. Then the simplified representative of $\pi \cdot D$ will have an over arc enclosing at least the terminal points $t_{\pi(i)}$ to $t_{\pi(j)}$, followed by an under arc, followed by an over arc enclosing the terminal points $t_{\pi(j+1)}$ to $t_{\pi(k-1)}$, followed by an under arc terminating at $t_{\pi(k)}$ as shown in Figure \ref{overarc2}.

\section{Injectivity}\label{inj}

In this section we show that $\beta \mapsto \beta \cdot I_n$ is injective. We will define an equivalence relation $\leftrightarrow$ and a ``reduction" relation $\rightarrow$ on $\VCD_n$ and a complexity measure $c:\VCD_n \rightarrow \bZ_{\geq 0}$. We will then show that if $D \rightarrow E$ then $c(D)>c(E)$ and if $D \leftrightarrow E$ then $c(D)=c(E)$. We will then show that given any $D \in \VCD_n$ there is a sequence of reductions and $\leftrightarrow$-equivalences ending in a unique diagram of minimal complexity, up to $\leftrightarrow$-equivalence. Furthermore, the reductions and $\leftrightarrow$-equivalences will spell out a virtual braid word, and the corresponding virtual braid will be well-defined (i.e. will depend only on $D$).

In particular, for $D$ in the orbit $\VB_n \cdot I_n$, the unique minimal diagram will be $I_n$, up to $\leftrightarrow$-equivalence. The proof will be packaged in an application of the diamond lemma (see Lemma \ref{dlemma}).

Let's begin by introducing the measure of complexity of a simplified virtual curve diagram. For each $i$ let $o_i(D)$ equal the number of over arcs strictly enclosing $t_i$. Let the \textbf{complexity}\index{complexity $c(D)$} $c(D)$ equal $\sum_{i=1}^n o_i(D)$. If $D$ is not simplified, define $o_i(D)=o_i(E)$ and $c(D)=c(E)$ where $E$ is the simplified diagram equivalent to $D$.

\begin{definition}\label{reddef}
Define a binary relation $\rightarrow$ \index{$\rightarrow$-reduction} on $\VCD_n$ as follows:
\begin{enumerate}
\item If a simplified diagram $D$ contains a terminal over arc of type $(i,i+1)$ then let $D \rightarrow \sigma_i^{-1} \cdot D$.
\item If a simplified diagram $D$ contains a terminal over arc of type $(i+1,i)$ then let $D \rightarrow \sigma_i \cdot D$.
\end{enumerate}

\begin{remark}\label{compl}
If $D_1 \rightarrow D_2$ then $c(D_1)>c(D_2)$. If $D_1$ had a terminal over arc of type $(i,i+1)$ (resp. $(i+1,i)$) then multiplying by $\sigma_i^{-1}$ (resp. $\sigma_i$) would have the effect in Figure \ref{lesscomp}. Note that in the figure, the complexity of the right hand side is already less by $1$ than the complexity of the left hand side, and the right hand side might not even be simplified (simplifying would only potentially decrease the complexity). Note also that there may be arcs that are not drawn, but they are unchanged from the left hand side to the right hand side.

\begin{figure}
\begin{center}
\includegraphics[scale=1.3]{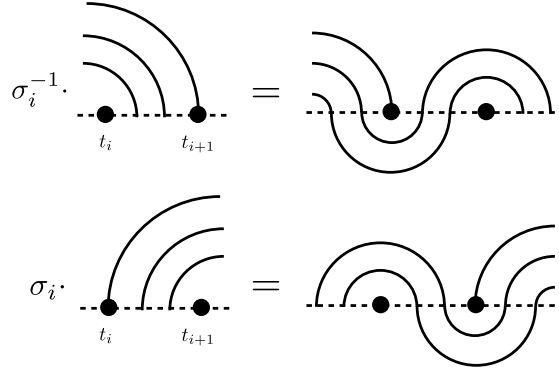}
\caption{The $\rightarrow$ relation decreases complexity.}\label{lesscomp}
\end{center}
\end{figure}

\end{remark}

\end{definition}

Define an equivalence relation $\leftrightarrow$\index{$\leftrightarrow$-equivalence} on virtual curve diagrams by $D \leftrightarrow E$ if there is a permutation $\pi$ such that $D=\pi \cdot E$. Denote the equivalence class of $D$ with square brackets, $[D]$. Similarly let $[\beta]$ be the left coset $S_n \beta \subset \VB_n$, where $S_n$ is the symmetric group generated by the $\tau_i$'s.

\begin{remark}\label{complperm}
If $D \leftrightarrow E$ then $c(D)=c(E)$. It is enough to check this in the case where $E=\tau_i \cdot D$. By the definition of the action of a generator $\tau_i$ (refer to Figure \ref{actexam}), one sees that $c(D) \geq c(\tau_i \cdot D)$, since the naive diagram for $\tau_i \cdot D$ might not be simplified. However by the same reasoning $c(\tau_i \cdot D) \geq c(\tau_i \cdot \tau_i \cdot D)=c(D)$ as well, so $c(D)=c(\tau_i \cdot D)$.
\end{remark}

Later we will need to apply the diamond lemma to the relation $\rightarrow$. We remind the reader now of the precise statement of the diamond lemma\index{diamond lemma}.

\begin{lemma}[Diamond lemma]\label{dlemma}
If $\rightarrow$ is a connected Noetherian binary relation (Noetherian: an infinite $a_1 \rightarrow a_2 \rightarrow \cdots$ is ultimately stationary), and if whenever $a \rightarrow b$ and $a \rightarrow c$ there is $d$ with $b \Rightarrow d$ and $c \Rightarrow d$ where $\Rightarrow$ is the reflexive transitive closure of $\rightarrow$, then there is a unique $m$ such that $\forall a, a \Rightarrow m$.
\end{lemma}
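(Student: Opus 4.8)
The plan is to prove this in three stages: (i) upgrade the hypothesized \emph{local} confluence to \emph{global} confluence (this is Newman's lemma), (ii) deduce existence and uniqueness of normal forms, and (iii) use connectedness to see there is only one normal form, namely $m$. For stage (i), I would show that $\rightarrow$ is confluent: whenever $a \Rightarrow b$ and $a \Rightarrow c$ there is $d$ with $b \Rightarrow d$ and $c \Rightarrow d$. This is exactly where the Noetherian hypothesis is used, via well-founded induction: since no infinite chain issues from $a$, I may assume the confluence property already holds at every $a'$ reachable from $a$ by at least one $\rightarrow$-step. Given $a \Rightarrow b$ and $a \Rightarrow c$, the case $a=b$ or $a=c$ is trivial, so write $a \rightarrow b_1 \Rightarrow b$ and $a \rightarrow c_1 \Rightarrow c$. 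The local confluence hypothesis applied to $a \rightarrow b_1$ and $a \rightarrow c_1$ produces $e$ with $b_1 \Rightarrow e$ and $c_1 \Rightarrow e$. Applying the induction hypothesis at $b_1$ to $b_1 \Rightarrow b$ and $b_1 \Rightarrow e$ gives $f$ with $b \Rightarrow f$ and $e \Rightarrow f$; applying it at $c_1$ to $c_1 \Rightarrow c$ and $c_1 \Rightarrow e \Rightarrow f$ gives $d$ with $c \Rightarrow d$ and $f \Rightarrow d$. Then $b \Rightarrow f \Rightarrow d$ and $c \Rightarrow d$, which is the classical diagram chase that names the lemma.

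For stage (ii), the Noetherian hypothesis says that iterating $\rightarrow$ from any point $a$ terminates: it reaches an element $\hat a$ with $a \Rightarrow \hat a$ that is \emph{terminal}, meaning $\hat a \rightarrow x$ forces $x = \hat a$. Confluence makes $\hat a$ unique: if $a \Rightarrow m_1$ and $a \Rightarrow m_2$ with $m_1,m_2$ terminal, confluence yields $d$ with $m_1 \Rightarrow d$ and $m_2 \Rightarrow d$, and terminality forces $m_1 = d = m_2$. So each $a$ has a well-defined normal form $\hat a$.

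For stage (iii), I would exploit connectedness. If $a \rightarrow b$ then $a \Rightarrow b \Rightarrow \hat b$ exhibits $\hat b$ as a terminal element reachable from $a$, so $\hat a = \hat b$ by the uniqueness just proved; hence $\hat{\phantom{x}}$ is constant across every $\rightarrow$-edge, and therefore, by induction on the length of a connecting zig-zag of $\rightarrow$- and $\leftarrow$-steps, constant on each connected component. Since $\rightarrow$ is connected, there is a single element $m$ with $\hat a = m$, i.e.\ $a \Rightarrow m$, for every $a$; and $m$ is the unique such element, since $m$ is itself terminal, so if also $a \Rightarrow m'$ for all $a$ then $m \Rightarrow m'$ forces $m' = m$. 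The only step with genuine content is the confluence argument of stage (i); stages (ii) and (iii) are bookkeeping.
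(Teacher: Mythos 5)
Your proof is correct, and it is the standard argument: stage (i) is exactly Huet's well-founded-induction proof of Newman's lemma (local confluence plus termination implies confluence), and stages (ii) and (iii) are the routine passage to unique normal forms and then to a single normal form via connectedness. There is nothing in the paper to compare it against: the paper states the diamond lemma only as a reminder of a known result and offers no proof, so your write-up supplies an argument the paper deliberately omits. One small caveat is worth recording. The paper glosses ``Noetherian'' as ``an infinite $a_1 \rightarrow a_2 \rightarrow \cdots$ is ultimately stationary,'' which literally permits self-loops $a \rightarrow a$; your induction step assumes the confluence property at every $a'$ reachable from $a$ by at least one step, which would be circular at such an $a$. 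The intended (and, in the paper's application, actual) hypothesis is genuine termination --- the complexity $c$ strictly decreases along $\rightarrow$, so no element is reachable from itself --- and under that reading your well-founded induction is sound as written. You might add one sentence making explicit that you induct on the well-founded strict reachability order $\rightarrow^{+}$, but this is a presentational point, not a gap.
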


We will be using the diamond lemma on the relation $\rightarrow$ (defined on the quotient of $\VCD_n$ by the action of permutations, to be explained in Definition \ref{extend}). Thus the following proposition is important. It says that for any vcd $D$ and any braid generator $\sigma_i$ we have $D \rightarrow \sigma_i \cdot D$ or $\sigma_i \cdot D \rightarrow D$. Likewise for $\sigma_i^{-1}$ either $D \rightarrow \sigma_i^{-1} \cdot D$ or $\sigma_i^{-1} \cdot D \rightarrow D$. Thus any action of a braid generator either performs a reduction or reverses a reduction.

\begin{proposition}\label{obv}
Let $D$ be a simplified diagram and fix an $i$ with $1 \leq i \leq n-1$. If $D$ has no terminal over arc of type $(i+1,i)$ then $\sigma_i \cdot D$ has a terminal over arc of type $(i,i+1)$. Likewise, if $D$ has no terminal over arc of type $(i,i+1)$ then $\sigma_i^{-1} \cdot D$ has a terminal over arc of type $(i+1,i)$.
\end{proposition}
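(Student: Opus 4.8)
The plan is to follow the single curve of $D$ that terminates at $t_i$ through the pushing-off procedure that defines the action of $\sigma_i$, together with a short case analysis on the type of that curve's terminal arc. The hypothesis is used to rule out the one case in which acting by $\sigma_i$ cancels an existing terminal over arc (the right-to-left reading of Figure~\ref{lesscomp}) instead of creating a new one. The second assertion is the mirror image of the first (interchange the roles of $t_i$ and $t_{i+1}$, and of over arcs opening to the left and to the right of their terminal point), so I would give the argument only for the first.

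First I would localize the action. As in the discussion following Proposition~\ref{tbraid} and as pictured in Figure~\ref{lesscomp}, acting by $\sigma_i$ on a simplified vcd interchanges $t_i$ and $t_{i+1}$ and is supported near these two points above the upper line: the only features of $D$ that change are the terminal arcs at $t_i$ and $t_{i+1}$ and the over arcs enclosing exactly one of $t_i,t_{i+1}$. In particular the curve $\gamma$ that terminated at $t_i$ now terminates at $t_{i+1}$ and acquires one extra over arc hugging $t_i$ on the side away from $t_{i+1}$ and reaching $t_{i+1}$; this is the loop $x_i$ visible in $\phi(\sigma_i)(x_i)=x_ix_{i+1}x_i^{-1}$, and it already appears in the curve diagram of $\sigma_i=\sigma_i\cdot I_n$. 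Pinning down precisely which arcs move and how, for a completely general simplified $D$ (which may have arbitrarily many over arcs running between the $i$-th and $(i+1)$-st columns), is the most laborious step and the real obstacle; once it is in place the rest is a finite check.

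Next I would run the case analysis. The extra over arc produced above terminates at the new terminal point $t_{i+1}$ and encloses $t_i$, so it is a terminal over arc of type $(i,i+1)$, provided it is not destroyed when $\sigma_i\cdot D$ is simplified. Using that in a simplified vcd every terminal over arc is of type $(j,j+1)$ or $(j+1,j)$ for some $j$, I would split according to the terminal arc of $\gamma$ in $D$: a base arc; a terminal under arc; a terminal over arc of type $(i-1,i)$; or a terminal over arc of type $(i+1,i)$. In the first two cases the extra over arc is freshly attached at $t_{i+1}$, and since its two endpoints are separated by $t_i$ and it is a terminal arc, no $T$- or $B$-move can eliminate it (the only $B$-moves that could reach it merely shorten it or absorb an adjacent over arc on its far side, still leaving a terminal over arc enclosing $t_i$ and ending at $t_{i+1}$). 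In the third case the extra over arc merges with the existing one, giving the over arc from its far endpoint to $t_{i+1}$, which now additionally encloses $t_i$ and is again a terminal over arc of type $(i,i+1)$. The fourth case is exactly the excluded hypothesis: here the new $x_i$-loop cancels against the existing over arc rather than enlarging the diagram (this is Figure~\ref{lesscomp} read in reverse). One also has to note that the rerouting of the other affected arcs --- the curve formerly ending at $t_{i+1}$, now ending at $t_i$, and the over arcs that enclosed exactly one of $t_i,t_{i+1}$ --- produces no simplifying move incident to the new terminal over arc, since those arcs are now incident to $t_i$ or pass strictly over both $t_i$ and $t_{i+1}$.

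Combining the three non-excluded cases gives the first assertion. For the second, the same analysis applied to $\sigma_i^{-1}$ --- whose action moves the curve terminating at $t_{i+1}$ to $t_i$ and gives it an over arc hugging $t_{i+1}$, of type $(i+1,i)$, which cancels precisely when $D$ already has a terminal over arc of type $(i,i+1)$ --- shows that if $D$ has no terminal over arc of type $(i,i+1)$ then $\sigma_i^{-1}\cdot D$ has one of type $(i+1,i)$.
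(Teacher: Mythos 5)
Your proposal takes essentially the same route as the paper: reduce to the local picture at $t_i,t_{i+1}$ (equivalently, to the top braid $t(D)$), run a finite case analysis on the possible terminal configurations there under the pushing-off action of $\sigma_i$, use the hypothesis to exclude precisely the configuration in which the new over arc would cancel against an existing terminal over arc of type $(i+1,i)$, and dispose of $\sigma_i^{-1}$ by mirror reflection. The only substantive difference is that the paper also tracks the terminal arc at $t_{i+1}$ and isolates one special sub-case (the under arc ending at $t_{i+1}$ enclosing only $t_i$ and a collection of parallel non-crossing under arcs), where the resulting diagram differs from the generic picture your blanket ``no simplifying move incident to the new terminal over arc'' would suggest --- though the required terminal over arc of type $(i,i+1)$ survives there as well, so your conclusion stands.
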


\begin{proof}
Since we are dealing with the action of braid generators on a vcd, this is really a statement about the curve diagram of the top braid $t(D)$ of $D$ (see Proposition \ref{tbraid}), and thus a statement about ordinary curve diagrams of ordinary braids.

Assume $D$ does not have a terminal over arc of type $(i+1,i)$. The possible configurations before and after multiplying by $\sigma_i$ are shown in Figure \ref{obvpic}. The right hand side of each equation has a terminal over arc of type $(i,i+1)$ as required.

However, the configuration in the bottom right has a special case which behaves slightly differently than the generic case in Figure \ref{obvpic} after multiplying by $\sigma_i$. It is shown in Figure \ref{special}. In the special case, the under arc terminating at $t_{i+1}$ encloses only the terminal point $t_i$ and a possibly empty collection of parallel non-crossing under arcs. After multiplying by $\sigma_i$ the diagram will be as in Figure \ref{special}, as opposed to Figure \ref{obvpic}. The right hand side still has a terminal over arc of type $(i,i+1)$, so we are done. The proof of the second half of the proposition, for $\sigma_i^{-1}$, follows by taking a mirror reflection of the above proof.

\begin{figure}
\begin{center}
\includegraphics[scale=1]{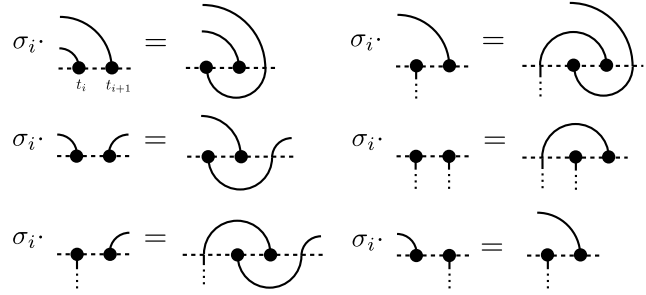}
\caption{Creating a terminal over arc of type $(i,i+1)$.}\label{obvpic}
\end{center}
\end{figure}

\begin{figure}
\begin{center}
\includegraphics[scale=1]{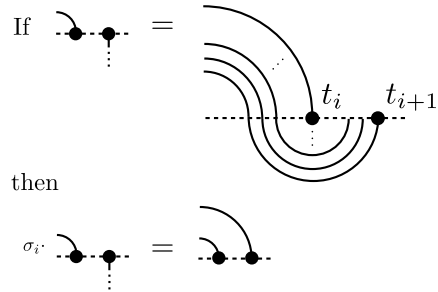}
\caption{A special case in Proposition \ref{obv}.}\label{special}
\end{center}
\end{figure}

\end{proof}

The following proposition will also be important when applying the diamond lemma to $\rightarrow$.

\begin{proposition}
Let $D$ be a simplified vcd and assume $D \rightarrow E$ and $D \rightarrow F$. Then there exists a vcd $G$ such that $E \Rightarrow G$ and $F \Rightarrow G$.
\end{proposition}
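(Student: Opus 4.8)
The plan is to confront the local confluence of $\rightarrow$ by a careful case analysis on the terminal over arcs of the simplified vcd $D$ that trigger the two reductions $D \rightarrow E$ and $D \rightarrow F$. Since each reduction comes from a terminal over arc, say one of type $(i,i+1)$ (so $E = \sigma_i^{-1} \cdot D$) or $(i+1,i)$ (so $E = \sigma_i \cdot D$), and similarly $F$ comes from a terminal over arc involving the adjacent pair $t_j, t_{j+1}$, the first thing I would do is split into the cases $|i-j| > 1$, $|i-j| = 1$, and $i = j$. In the far-apart case $|i-j|>1$, the two braid generators involved commute and, more importantly, the two terminal over arcs lie in disjoint regions above the upper line, so applying one reduction does not disturb the terminal over arc witnessing the other. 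Here I would argue that $E$ still has the terminal over arc of type coming from the $j$-reduction (using the locality already exploited in Remark \ref{compl} and the picture in Figure \ref{lesscomp}, where arcs not drawn are unchanged), and symmetrically for $F$; then $G := \sigma_i^{\mp 1}\sigma_j^{\mp 1} \cdot D$ works with $E \rightarrow G$ and $F \rightarrow G$ each a single reduction step.

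For the case $i = j$: if the two reductions come from the same over arc there is nothing to prove ($E = F$), so they come from two different terminal over arcs, one of type $(i,i+1)$ and one of type $(i+1,i)$, both terminating among $\{t_i, t_{i+1}\}$. But two terminal over arcs of opposite type at the same pair would have to cross each other above the upper line, which is forbidden by the over-arc condition of Definition \ref{vcd} (Figure \ref{avoid}); so in fact this subcase is vacuous — only one of the two reductions at index $i$ can ever be available. The genuinely substantive case is $|i - j| = 1$, say $j = i+1$, where the two terminal over arcs share the terminal point $t_{i+1}$ (one involves $t_i, t_{i+1}$ and the other $t_{i+1}, t_{i+2}$). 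Here I would invoke the ``top braid'' structure from Proposition \ref{tbraid}: the relevant data is the ordinary curve diagram of $t(D) \in \pB_n$ sitting on top of $D$, so the whole question reduces to local confluence of $\rightarrow$ on ordinary curve diagrams of ordinary braids near three consecutive terminal points $t_i, t_{i+1}, t_{i+2}$. I would enumerate the finitely many configurations of the two terminal over arcs at these three points (using that no two over arcs cross above the upper line, which severely limits the nesting patterns), and for each one exhibit an explicit common descendant $G$ together with the reduction sequences $E \Rightarrow G$ and $F \Rightarrow G$; typically $G$ is reached by a short word in $\sigma_i^{\pm 1}, \sigma_{i+1}^{\pm 1}$ whose consistency is exactly the braid relation $\sigma_i\sigma_{i+1}\sigma_i = \sigma_{i+1}\sigma_i\sigma_{i+1}$ already verified to act well in Figure \ref{R3}, combined with Proposition \ref{obv} to guarantee that each intended step is indeed a legal reduction (i.e. that the needed terminal over arc is present).

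The main obstacle I expect is the bookkeeping in this $|i-j|=1$ case: one must check that after performing, say, $E = \sigma_i^{-1}\cdot D$, the diagram $E$ really does have the chain of terminal over arcs needed to run $E \Rightarrow G$, and that complexity strictly drops at each step so the process terminates inside the diamond-lemma framework (Remark \ref{compl} gives the strict decrease, so this is automatic once each step is a legitimate $\rightarrow$-step). The special-case behaviour flagged in Proposition \ref{obv} (Figure \ref{special}) will have to be tracked here too, since it changes the precise shape of the intermediate diagrams though not which terminal over arc type appears. Once all three cases are dispatched, local confluence holds, and combined with the Noetherian property from Remark \ref{compl} this is exactly the hypothesis needed to later feed $\rightarrow$ into Lemma \ref{dlemma}.
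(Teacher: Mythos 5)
Your plan is a genuinely different route from the paper's. You set up a local critical-pair analysis, splitting on $|i-j|>1$, $|i-j|=1$, $i=j$, and try to join $E$ and $F$ by short explicit reduction sequences. The paper instead joins them globally: by Proposition \ref{tbraid} every reduction of $D$ is a reduction of its top braid $t(D)$, and by Remark \ref{notoas} the simplified curve diagram of a non-trivial ordinary braid always has a terminal over arc, so one may keep reducing $E$ and keep reducing $F$ until their top braids are trivial; both land on the same diagram $G=t(D)^{-1}\cdot D$. Since the diamond lemma only requires a join under the reflexive transitive closure $\Rightarrow$, no case analysis is needed at all. Your observation that the $i=j$ case is vacuous is correct and worth keeping: two terminal over arcs of opposite type at the pair $t_i,t_{i+1}$ would interleave as in Figure \ref{avoid}, violating the over-arc condition of Definition \ref{vcd}, and only one terminal arc can end at a given terminal point.

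The gap is that, within your own framework, essentially the entire content of the proposition is the $|i-j|=1$ case, and you do not carry it out: ``enumerate the finitely many configurations and exhibit an explicit common descendant'' is the statement to be proved, not a proof of it. Until that enumeration is done --- including the check that after re-simplifying $\sigma_i^{\mp 1}\cdot D$ the terminal over arc needed for the next step of $E\Rightarrow G$ is actually present and of the claimed type (the special case of Figure \ref{special} shows this is not automatic) --- the argument is incomplete. A smaller inaccuracy: for $|i-j|>1$ the two terminal over arcs need not lie in disjoint regions; a terminal over arc of type $(i,i+1)$ may enclose many terminal points and can enclose the other terminal over arc entirely (cf.\ Figure \ref{overarc2}). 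What actually saves that case is that the reduction of Figure \ref{lesscomp} only changes the diagram near $t_i,t_{i+1}$ and the enclosure count of the reduced arc, leaving the data defining the $(j,j+1)$-type arc untouched. If you want to bypass all of this bookkeeping, adopt the paper's device and reduce both sides all the way down to $t(D)^{-1}\cdot D$.
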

\begin{proof}
Consider the top braid $t(D)$. Since $D \rightarrow E$ and $D \rightarrow F$, we have $t(D) \rightarrow t(E)$ and $t(D) \rightarrow t(F)$ along the same terminal over arcs. By Remark \ref{notoas} we can reduce $t(E)$ until it is trivial and similarly reduce $t(F)$ until it is trivial. Thus $E \Rightarrow G$ and $F \Rightarrow G$ where $G=t(D)^{-1} \cdot D$.
\end{proof}

The following definition is related to Example \ref{pexam}. It considers when a permutation preserves a reduction, or equivalently when a permutation preserves the existence of a particular terminal over arc.

\begin{definition}\label{intact}
Let $D$ be a simplified diagram. Let $\pi$ be some permutation. Assume there is a terminal over arc of type $(i,i+1)$ or $(i+1,i)$ in $D$. We say it is left \textbf{intact}\index{intact} by $\pi$ or that the terminal over arc is intact in $\pi \cdot D$ if $\pi(i+1)=\pi(i)+1$ (note that it is the same condition for both types of terminal over arcs).
\end{definition}

\begin{remark}\label{intact2}
If $\pi$ leaves a terminal over arc of type $(i,i+1)$ (resp. $(i+1,i)$) intact, then the simplified representative of $\pi \cdot D$ will contain a terminal over arc of type $(\pi(i),\pi(i)+1)$ (resp. $(\pi(i)+1,\pi(i))$). If a terminal over arc is not left intact, then the simplified representative $\pi \cdot D$ will contain one of the two configurations in Figure \ref{notintact} (in the figure, we are depicting the case where the terminal over arc was of type $(i,i+1)$). In both configurations, the dotted gap might contain other terminal points. Note that if $\pi(i+1)=\pi(i)+1$ then $\pi \sigma_i^{\pm 1} = \sigma_{\pi(i)}^{\pm 1} \pi$. Note also that if a terminal over arc is left intact by a permutation, it is not the entire over arc that is left ``intact" (graphically speaking), but only at least the portion enclosing the terminal point of the arc and the adjacent terminal point.

\begin{figure}
\begin{center}
\includegraphics[scale=1.3]{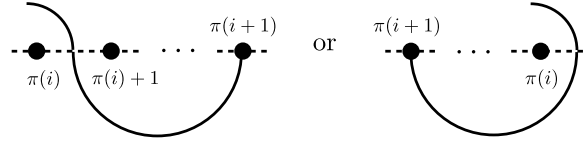}
\caption{The two configurations of a terminal over arc of type $(i,i+1)$ not left intact by a permutation.}\label{notintact}
\end{center}
\end{figure}

\end{remark}

The next proposition will tell us that one terminal over arc being intact does not interfere with another terminal over arc being intact. This will be convenient when proving the second part of Theorem \ref{diamond}.

\begin{proposition}\label{perms}
Let $D$ be a simplified diagram. Assume $D$ contains a terminal over arc of type $(i,i+1)$ and $\pi \cdot D$ contains a terminal over arc of type $(j,j+1)$ or $(j+1,j)$, where $\pi$ is some permutation. Then there is a permutation $\gamma$ such that both terminal over arcs are intact in $\gamma \cdot D$.
\end{proposition}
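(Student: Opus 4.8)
The plan is to reduce the statement to a purely combinatorial fact about permutations of $\{1,\dots,n\}$ and the ``translated interval'' structure from Example~\ref{pexam}. By Definition~\ref{intact}, the terminal over arc of type $(i,i+1)$ in $D$ is intact in $\gamma\cdot D$ precisely when $\gamma(i+1)=\gamma(i)+1$, and similarly the second terminal over arc (of type $(j,j+1)$ or $(j+1,j)$ in $\pi\cdot D$) will be intact in $\gamma\cdot D$ provided $\gamma$ agrees with $\pi$ on the relevant adjacency. So the real task is: knowing that $\pi$ keeps $\{j,j+1\}$ adjacent (that is, $\pi$ does not separate the two terminal points involved in the $(j,j+1)$ or $(j+1,j)$ arc), find a permutation $\gamma$ that simultaneously keeps $\{i,i+1\}$ adjacent \emph{and} induces, above the upper line and between those two terminal points, the same local picture near the $j$-arc that $\pi$ does.

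First I would make precise what ``the terminal over arc of type $(j,j+1)$ in $\pi\cdot D$'' depends on: by the discussion following Proposition~\ref{tbraid} and by Remark~\ref{intact2}, it depends only on how $\pi$ permutes the terminal points that lie under the various terminal over arcs of $D$, i.e.\ on the induced ordering of those terminal points. So it suffices to produce $\gamma$ with (a) $\gamma(i+1)=\gamma(i)+1$, and (b) $\gamma$ placing the terminal point at position $j'$ (where $t_{j'}$ is the terminal point of the $j$-arc) immediately next to its neighbor in the same cyclic/linear sense that $\pi$ does, and moreover $\gamma$ agreeing with $\pi$ up to order-isomorphism on the block of terminal points enclosed by the relevant over arcs. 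Concretely I would take $\gamma$ to be a modification of $\pi$: start from $\pi$, and if $\pi$ already leaves the $(i,i+1)$ arc intact we are done with $\gamma=\pi$; otherwise, compose $\pi$ with a sequence of adjacent transpositions that slide the image $\pi(i)$ next to $\pi(i+1)$ \emph{without} moving any terminal point relevant to the $j$-arc across the $j$-block. The key observation making this possible is that the terminal point $t_i$, being enclosed by a terminal over arc of type $(i,i+1)$ together with $t_{i+1}$, and the terminal points involved in the $j$-arc, are governed by \emph{nested or disjoint} blocks (two terminal over arcs cannot interleave their enclosed terminal-point sets in the forbidden way of Figure~\ref{avoid}); hence there is always ``room'' to bring $\{i,i+1\}$ together inside or outside the $j$-block without disturbing the $j$-block's internal adjacency.

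The key steps, in order: (1) translate both intactness conditions into adjacency conditions on $\gamma$ via Definition~\ref{intact} and Remark~\ref{intact2}; (2) observe, using the non-interleaving condition on over arcs (Figure~\ref{avoid}) applied to the two terminal over arcs in question, that the sets of terminal points enclosed by the $i$-arc and by the $j$-arc are either nested or disjoint; (3) in each of these cases, explicitly build $\gamma$ by post-composing $\pi$ with a product of transpositions supported away from the $j$-block that merges $\pi(i)$ and $\pi(i+1)$ into adjacent positions, or, when the blocks are nested, merges them inside the common block; (4) verify that this $\gamma$ leaves both terminal over arcs intact, using Remark~\ref{intact2} and Example~\ref{pexam} to confirm that the local picture above the upper line near the $j$-arc is unchanged.

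The main obstacle I anticipate is step (3) in the nested case together with the bookkeeping in step (4): one must be careful that sliding $\pi(i)$ toward $\pi(i+1)$ does not push some enclosed terminal point of the $i$-arc across the boundary of the $j$-block in a way that changes whether the $j$-arc is intact, and that when the $j$-arc is of type $(j+1,j)$ rather than $(j,j+1)$ the same adjacency $\gamma(j+1)=\gamma(j)+1$ still does the job (which it does, since intactness is type-independent per Definition~\ref{intact}). Handling the degenerate cases --- where $t_i$ or $t_{i+1}$ coincides with one of the terminal points of the $j$-arc, or where one block is a single point --- will require a short separate check, but in each such case either $\pi$ itself already works or a single adjacent transposition fixes it.
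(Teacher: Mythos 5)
Your step (1) is right and matches the paper: both intactness requirements translate into adjacency constraints on $\gamma$, namely $\gamma(i+1)=\gamma(i)+1$ and $\gamma\pi^{-1}(j+1)=\gamma\pi^{-1}(j)+1$. But from there your proposal misses the actual difficulty. Writing $a=\pi^{-1}(j)$ and $b=\pi^{-1}(j+1)$, the two constraints are jointly unsatisfiable by \emph{any} permutation whenever the ordered pairs $(i,i+1)$ and $(a,b)$ share exactly one entry in a conflicting position --- for instance if $a=i$ but $b\neq i+1$, then the constraints force $\gamma(i+1)=\gamma(i)+1=\gamma(b)$, hence $b=i+1$, a contradiction. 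No amount of ``sliding $\pi(i)$ next to $\pi(i+1)$ by adjacent transpositions'' can repair this: the obstruction is not a lack of room but a genuine combinatorial inconsistency. The heart of the proposition, and the bulk of the paper's proof, is showing that these conflicting cases ($\pi(i)=j$, $\pi(i+1)=j+1$, and two further special cases when $\pi(i+1)<\pi(i)$) simply \emph{cannot occur} in a vcd; this is a geometric argument tracing how the type-$(i,i+1)$ terminal over arc breaks up under $\pi$ (via Example~\ref{pexam}) and deriving a contradiction with the presence of the $(j,j+1)$ or $(j+1,j)$ arc in $\pi\cdot D$. Your closing remark that in the degenerate cases ``either $\pi$ itself already works or a single adjacent transposition fixes it'' is precisely where the proposal breaks down --- it is false in the conflicting cases, and recognizing and excluding them is the content of the proof.

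A secondary problem: your step (2) invokes the non-interleaving condition (Figure~\ref{avoid}) on ``the $i$-arc and the $j$-arc,'' but these arcs live in different diagrams ($D$ and $\pi\cdot D$ respectively), so that condition does not apply to the pair directly. The nested-or-disjoint picture (Figure~\ref{configs} in the paper) is only available \emph{after} a suitable $\gamma$ has been found, so it cannot be used as an ingredient in constructing $\gamma$. In the non-conflicting cases the construction of $\gamma$ is easy and needs no such structural input; in the conflicting cases no $\gamma$ exists and the work is to rule the configuration out.
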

\begin{proof}
Note that our goal is to construct a permutation $\gamma$ such that $\gamma(i+1)=\gamma(i)+1$ and $\gamma \pi^{-1}(j+1)=\gamma \pi^{-1}(j)+1$. If $\pi$ leaves $(i,i+1)$ intact, then we can choose $\gamma=\pi$. Assume $\pi$ does not leave $(i,i+1)$ intact. Thus we have one of the configurations of Figure \ref{notintact}. Assume it is in the first configuration, i.e. $\pi(i+1)>\pi(i)+1$. There are two cases where it is not possible to construct a $\gamma$ (for combinatorial reasons). They are $\pi(i)=j$ and $\pi(i+1)=j+1$. We will show they cannot occur in a vcd.

Assume first that $\pi(i+1) > \pi(i)+1$ and $\pi(i)=j$. Then clearly $j+1=\pi(i)+1$ and the configuration present in $\pi \cdot D$ looks as in the left side of Figure \ref{proof13}. If we now apply $\pi^{-1}$ to arrive back at $D$, we will get the right side of Figure \ref{proof13} (this is due to the description in Example \ref{pexam}), which should have a terminal over arc of type $(i,i+1)$. However the presence of a terminal over arc of type $(j,j+1)$ in $\pi \cdot D$ prevents this.

Assume that $\pi(i+1) > \pi(i)+1$ and $\pi(i+1)=j+1$. The configuration must look as in the left side of Figure \ref{proof14}. After multiplying by $\pi^{-1}$ to get back to $D$, we have a contradiction. There cannot be a terminal over arc in $\pi \cdot D$ of type $(j+1,j)$ with $j+1=\pi(i+1)$ while at the same time a terminal over arc of type $(i,i+1)$ in $D$.

\begin{figure}
\begin{center}
\includegraphics[scale=1.3]{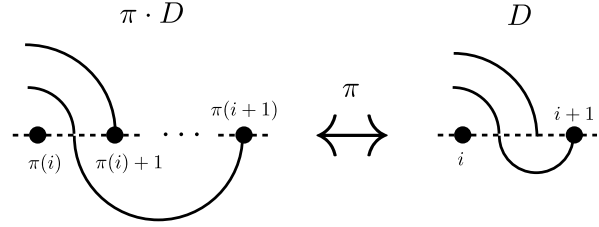}
\caption{Impossibility of the case $\pi(i)=j$ in Proposition \ref{perms}.}\label{proof13}
\end{center}
\end{figure}

\begin{figure}
\begin{center}
\includegraphics[scale=1.3]{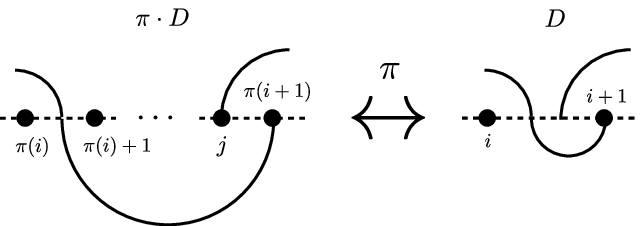}
\caption{Impossibility of the case $\pi(i+1)=j+1$ in Proposition \ref{perms}.}\label{proof14}
\end{center}
\end{figure}

\begin{figure}
\begin{center}
\includegraphics[scale=1.3]{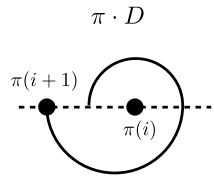}
\caption{Impossibility of the case $\pi(i+1)=\pi(i)-1=j$ in Proposition \ref{perms}.}\label{impcase}
\end{center}
\end{figure}

Assume now that the terminal over arc of type $(i,i+1)$ not left intact by $\pi$ is in the second configuration of Figure \ref{notintact}, that is, $\pi(i+1)<\pi(i)$. The cases where it is not possible to construct $\gamma$ are when $\pi(i)=j$, $\pi(i+1)=j+1$ and the special case $\pi(i+1)=\pi(i)-1=j$ or $\pi(i+1)=\pi(i)-1=j-1$. The first two cases are handled as before. Consider the special case $\pi(i+1)=\pi(i)-1=j$ or $\pi(i+1)=\pi(i)-1=j-1$. Then the terminal over arc of type $(i,i+1)$ must appear in $\pi \cdot D$ as in Figure \ref{impcase}. It is clear that there can be no terminal over arc terminating at $\pi(i)$.

In all cases aside from the above, a $\gamma$ satisfying the required conditions can be constructed combinatorially without issue.

\end{proof}

\begin{definition}\label{extend}
We can extend $\rightarrow$ to the set of pairs \[ \pP'= \{(\beta,D)| \beta \in \VB_n, D \in \VCD_n\}, \] given by $(\beta, D) \rightarrow (\sigma_i^{\pm 1} \beta, \sigma_i^{\pm 1} \cdot D)$ if $D \rightarrow \sigma_i^{\pm 1} \cdot D$ according to Definition \ref{reddef}.

Consider the set \[ \pP=\{ ([\beta], [D]) | \beta \in \VB_n, D \in \VCD_n\}. \] 

There is also an extension of $\rightarrow$ on $\pP$ given by $(\bd_1,\pD_1) \rightarrow (\bd_2,\pD_2)$ if there exists $\beta_1 \in \bd_1, D_1 \in \pD_1, \beta_2 \in \bd_2, D_2 \in \pD_2$ such that $(\beta_1,D_1) \rightarrow (\beta_2,D_2)$.

If there is a sequence $X \rightarrow Y \rightarrow \ldots \rightarrow Z$ we will write $X \Rightarrow Z$ (for $X,Y, \ldots, Z$ in $\VCD_n$ or $\pP'$ or $\pP$).
\end{definition}

The next theorem demonstrates that Lemma \ref{dlemma}, is applicable to $\rightarrow$ on $\pP$.

\begin{theorem}\label{diamond}
The relation $\rightarrow$ on $\pP$ satisfies the following properties:
\begin{enumerate}
\item There is no infinite sequence $(\bd_1,\pD_1) \rightarrow (\bd_2,\pD_2) \rightarrow (\bd_3,\pD_3) \rightarrow \ldots$.
\item If $(\bd,\pD) \rightarrow (\be,\pE)$ and $(\bd,\pD) \rightarrow (\bf,\pF)$ then there is a pair $(\bg,\pG)$ such that $(\be,\pE) \Rightarrow (\bg,\pG)$ and $(\bf,\pF) \Rightarrow (\bg,\pG)$.
\item The set $\{([\beta],[D]) \in \pP|\beta \cdot I_n = D\} \subset \pP$ is a connected component of $\rightarrow$.
\end{enumerate}
\end{theorem}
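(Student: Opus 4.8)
The plan is to verify the three diamond-lemma hypotheses for $\rightarrow$ on $\pP$, using the machinery already assembled in Section \ref{inj}. For part (1), I would argue that $\rightarrow$ strictly decreases a non-negative integer. By Remark \ref{compl} each step $D_1 \rightarrow D_2$ satisfies $c(D_1) > c(D_2)$, and by Remark \ref{complperm} the complexity is $\leftrightarrow$-invariant, so $c$ descends to a well-defined function on $[D]$; since the relation $\rightarrow$ on $\pP$ is defined by lifting to representatives, any infinite chain $(\bd_1,\pD_1) \rightarrow (\bd_2,\pD_2) \rightarrow \cdots$ would force $c(\pD_1) > c(\pD_2) > \cdots$ in $\bZ_{\geq 0}$, which is impossible. (One must check that the $\beta$-component does not matter for Noetherianity — it doesn't, since the measure is read off of $D$ alone.)

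For part (2) — the confluence step — I would first reduce to the case of simplified representatives and use Proposition \ref{obv} together with the preceding proposition (confluence of $\rightarrow$ on $\VCD_n$ itself, with common reduct $G = t(D)^{-1}\cdot D$). The subtlety over $\pP$ rather than $\VCD_n$ is that $\pD$ is a $\leftrightarrow$-class, so the two reductions $(\bd,\pD) \rightarrow (\be,\pE)$ and $(\bd,\pD)\rightarrow(\bf,\pF)$ may be witnessed by applying $\sigma_i^{\pm1}$ and $\sigma_j^{\pm1}$ along terminal over arcs of $D' = \pi\cdot D$ for possibly different permutations $\pi$. This is exactly where Proposition \ref{perms} enters: it lets me find a single $\gamma$ such that both relevant terminal over arcs are intact in $\gamma\cdot D$, so that both reductions can be realized simultaneously from the representative $\gamma\cdot D$, and then the $\VCD_n$-confluence (reducing each $t(\gamma\cdot D)$-factor to the trivial braid via Remark \ref{notoas}) produces a common $(\bg,\pG)$. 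I also need the bookkeeping identity $\pi\sigma_i^{\pm1} = \sigma_{\pi(i)}^{\pm1}\pi$ from Remark \ref{intact2} to track the braid coordinate $\bd$ through these permutation conjugations, so that the descent to $\pP$ is legitimate. The two terminal over arcs in $D$ might be of the same type or opposite types, and might share a terminal point; Proposition \ref{perms} is stated for one $(i,i+1)$-arc and one $(j,j+1)$ or $(j+1,j)$-arc, so I may need the mirror version for an $(i+1,i)$-arc, obtained as in Proposition \ref{obv} by reflection.

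For part (3), I must show $\pC := \{([\beta],[D]) \in \pP \mid \beta\cdot I_n = D\}$ is a connected component of $\rightarrow$. Connectedness: given $([\beta],[\beta\cdot I_n])$, write $\beta$ as a word in the generators; acting by one generator $\sigma_i^{\pm1}$ or $\tau_i$ moves $\beta\cdot I_n$ to $(\sigma_i^{\pm1}\beta)\cdot I_n$ or $(\tau_i\beta)\cdot I_n$, and by Proposition \ref{obv} each $\sigma_i^{\pm1}$-step is either a $\rightarrow$-reduction or the reverse of one (a $\tau_i$-step is a $\leftrightarrow$-equivalence, hence is absorbed in passing to $\pP$); so $([\beta],[\beta\cdot I_n])$ is connected to $([1],[I_n])$, and any two elements of $\pC$ are connected to each other through $([1],[I_n])$. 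Closure (that no $\rightarrow$-step leaves $\pC$): if $([\beta],[\beta\cdot I_n])\rightarrow(\be,\pE)$ then by Definition \ref{extend} the step is $(\sigma_i^{\pm1}\beta, \sigma_i^{\pm1}\cdot(\beta\cdot I_n))$ up to $\leftrightarrow$, and $\sigma_i^{\pm1}\cdot(\beta\cdot I_n) = (\sigma_i^{\pm1}\beta)\cdot I_n$ since the action is a genuine left action; hence $(\be,\pE)\in\pC$. Likewise for reverse steps into the component.

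I expect part (2) to be the main obstacle: marrying the abstract diamond condition with the geometry requires the simultaneous-intactness statement of Proposition \ref{perms}, correctly handling the same/opposite-type and shared-terminal-point cases, and verifying that the braid coordinate $\bd$ descends consistently through the permutation conjugations — the combinatorics is delicate even though each ingredient is already in hand. Parts (1) and (3) are comparatively routine once the action is known to be well-defined (Section \ref{vaction}) and the complexity behaves as in Remarks \ref{compl} and \ref{complperm}.
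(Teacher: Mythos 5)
Your proposal matches the paper's proof step for step: the complexity descent of Remark \ref{compl} for (1), Proposition \ref{perms} together with the conjugation identity of Remark \ref{intact2} and confluence via the top braid $t(\gamma\cdot D)$ for (2), and Proposition \ref{obv} plus decomposition of $\beta$ into generators for (3). The one point you flag as delicate but leave open --- why the two reduction paths from $\gamma\cdot D$ down to $G=t(\gamma\cdot D)^{-1}\cdot(\gamma\cdot D)$ produce the same braid coordinate $\bg$ --- is closed in the paper by a one-line appeal to Artin's faithfulness theorem for classical curve diagrams, which forces the two spelled-out braid words to be equal in $\pB_n$.
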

\begin{proof}
The first part follows from Remark \ref{compl}: if $D_1 \rightarrow D_2$ then $c(D_1)>c(D_2)$. 

For the second part let $(\bd,\pD) \rightarrow (\be,\pE)$ and $(\bd,\pD) \rightarrow (\bf,\pF)$. Assume that $\beta \in \bd$, $D \in \pD$, and $\pi$ is a permutation such that $D$ has a terminal over arc of type $(i,i+1)$ and $\pi \cdot D$ has a terminal over arc of type $(j,j+1)$ or $(j+1,j)$. Let $E=\sigma_i^{-1} \cdot D$ and $F=\sigma_j^{\pm 1} \cdot (\pi \cdot D)$.

By Proposition \ref{perms}, there is a permutation $\gamma$ such that the terminal over arc of type $(i,i+1)$ in $D$ and the terminal over arc of type $(j,j+1)$ or $(j+1,j)$ in $\pi \cdot D$ are intact in $\gamma \cdot D$. There are a few ways that two terminal over arcs can overlap, they are all depicted in Figure \ref{configs} up to a mirror reflection.

\begin{figure}
\begin{center}
\includegraphics[scale=1.3]{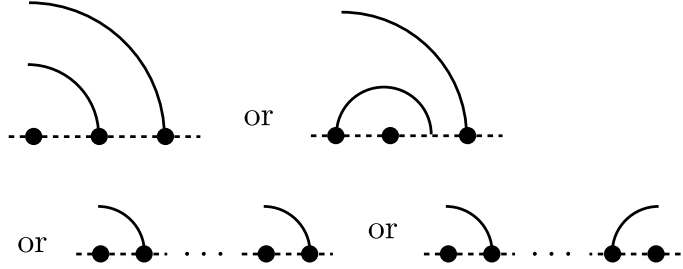}
\caption{The configurations of terminal over arcs in $\gamma \cdot D$.}\label{configs}
\end{center}
\end{figure}

\begin{figure}
\begin{center}
\includegraphics[scale=1.5]{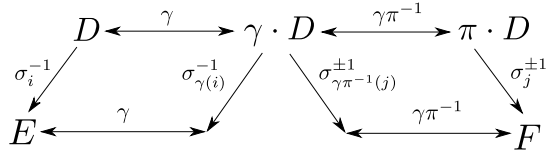}
\caption{A preliminary commutative diagram.}\label{comdiag}
\end{center}
\end{figure}

Since $\gamma$ leaves $(i,i+1)$ in $D$ intact in $\gamma \cdot D$ and $\gamma \pi^{-1}$ leaves $(j,j+1)$ or $(j+1,j)$ in $\pi \cdot D$ intact in $\gamma \cdot D$, by Remark \ref{intact2} we have \[ \gamma \sigma_i^{-1} = \sigma_{\gamma(i)}^{-1} \gamma, \] and \[ \gamma\pi^{-1} \sigma_j^{\pm 1} = \sigma_{\gamma\pi^{-1}(j)}^{\pm 1} \gamma\pi^{-1}.\] Thus we have the commutative diagram of Figure \ref{comdiag}.

In Figure \ref{comdiag} we have two reductions on $\gamma \cdot D$. We know by Proposition \ref{tbraid} that there is a common reduction $G=t(\gamma \cdot D)^{-1} \cdot (\gamma \cdot D)$. As well, by Artin's theorem, we know that both reduction paths from $\gamma \cdot D$ to $G$ spell out two braid words that are equivalent as braids (since curve diagrams faithfully represent ordinary braids). Thus we have a completed ``diamond" as in Figure \ref{comdiag2}.

\begin{figure}
\begin{center}
\includegraphics[scale=1.5]{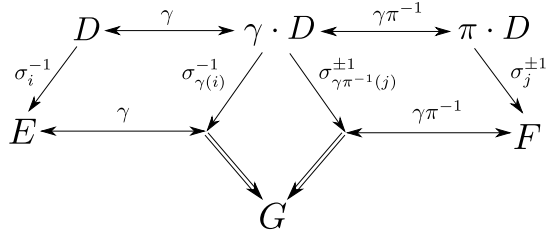}
\caption{A completed ``diamond".}\label{comdiag2}
\end{center}
\end{figure}

Finally we show that $\{([\beta],[D]) \in \pP|\beta \cdot I_n = D\}$ is a connected component of $\rightarrow$. By Proposition \ref{obv}, if $D$ does not have a terminal over arc of type $(i+1,i)$ then $\sigma_i \cdot D$ does so in this case $\sigma_i \cdot D \rightarrow D$. If $D$ does have a terminal over arc of type $(i+1,i)$ then $D \rightarrow \sigma_i \cdot D$. Thus for any $D$ either $D \rightarrow \sigma_i \cdot D$ or $\sigma_i \cdot D \rightarrow D$ (with a similar statement holding for $\sigma_i^{-1}$). 

Now any $D \in \VB_n \cdot I_n$ is of the form $\beta \cdot I_n$ for some virtual braid word $\beta$ so there is a sequence of diagrams \[ I_n=D_1,D_2,\ldots,D_r=D,\] such that for all $k \geq 1$ either $D_k \leftrightarrow D_{k+1}$ or $D_k \rightarrow D_{k+1}$ or $D_{k+1}\rightarrow D_k$. Thus $\{([\beta],[D]) \in \pP|\beta \cdot I_n = D\}$ is a connected component of $\rightarrow$.
\end{proof}

We can now prove injectivity.

\begin{theorem}\label{inject}
Let $\beta \in \VB_n$ such that $\beta \cdot I_n = I_n$. Then $\beta=1_{\VB_n}$.
\end{theorem}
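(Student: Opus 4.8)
The plan is to deduce Theorem \ref{inject} directly from Theorem \ref{diamond} via the diamond lemma (Lemma \ref{dlemma}). First I would observe that Theorem \ref{diamond} establishes exactly the hypotheses of Lemma \ref{dlemma} for the relation $\rightarrow$ restricted to the connected component $\pC = \{([\alpha],[D]) \in \pP \mid \alpha \cdot I_n = D\}$: part (1) gives the Noetherian property, part (2) gives the local confluence (the ``diamond''), and part (3) identifies $\pC$ as a genuine connected component, so that $\rightarrow$ restricted to $\pC$ is a connected Noetherian locally confluent relation. Hence by Lemma \ref{dlemma} there is a unique element $m \in \pC$ with $X \Rightarrow m$ for every $X \in \pC$.

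Next I would identify this unique minimal element. The pair $([1_{\VB_n}],[I_n])$ lies in $\pC$ since $1_{\VB_n} \cdot I_n = I_n$. Moreover $I_n$ is simplified and has no terminal over arcs at all (its only arcs are terminal base arcs), so by Definition \ref{reddef} there is no $Y$ with $([1_{\VB_n}],[I_n]) \rightarrow Y$; that is, $([1_{\VB_n}],[I_n])$ is $\rightarrow$-terminal. A $\rightarrow$-terminal element of $\pC$ must coincide with $m$: indeed $([1_{\VB_n}],[I_n]) \Rightarrow m$, but a terminal element admits no outgoing reduction, forcing $([1_{\VB_n}],[I_n]) = m$.

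Now suppose $\beta \in \VB_n$ satisfies $\beta \cdot I_n = I_n$. Then $([\beta],[I_n]) \in \pC$, so $([\beta],[I_n]) \Rightarrow m = ([1_{\VB_n}],[I_n])$. But $([\beta],[I_n])$ is itself $\rightarrow$-terminal, for the same reason as above: the second coordinate $[I_n]$ is represented by the simplified diagram $I_n$, which has no terminal over arcs, so no reduction from Definition \ref{reddef} applies and hence (via Definition \ref{extend}) no reduction out of $([\beta],[I_n])$ exists. A terminal element reducing to $m$ equals $m$, so $([\beta],[I_n]) = ([1_{\VB_n}],[I_n])$; in particular $[\beta] = [1_{\VB_n}]$ as left cosets, i.e. $\beta \in S_n$. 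It remains to upgrade this to $\beta = 1_{\VB_n}$: since $\beta \in S_n$ acts on $I_n$ purely by permuting the $n$ parallel strands, $\beta \cdot I_n = I_n$ forces the underlying permutation of $\beta$ to be trivial, and a permutation braid in $S_n$ with trivial underlying permutation is $1_{\VB_n}$ (the map $S_n \hookrightarrow \VB_n$ is a section of the projection to the symmetric group). This finishes the proof.

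The step I expect to be most delicate is the bookkeeping needed to apply the diamond lemma cleanly on $\pP$ rather than on $\VCD_n$ — in particular making sure that the equivalence $\leftrightarrow$ by permutations has been fully absorbed into the passage to cosets $([\beta],[D])$, so that ``$\rightarrow$-terminal'' really does mean ``no reduction after any permutation relabelling,'' which is what lets us conclude $\beta \in S_n$ rather than merely $\beta \in S_n \cdot (\text{stabilizer stuff})$. The rest is a routine unwinding of Definitions \ref{reddef}, \ref{extend} and the already-proved Theorem \ref{diamond}.
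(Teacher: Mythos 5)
Your proposal is correct and follows essentially the same route as the paper: apply the diamond lemma to the connected component singled out by Theorem \ref{diamond}(3), identify $([1_{\VB_n}],[I_n])$ as the unique minimal element since $I_n$ has no terminal over arcs, conclude $[\beta]=[1_{\VB_n}]$, and finish by noting that a nontrivial permutation does not fix $I_n$. Your write-up is somewhat more careful than the paper's (explicitly restricting the diamond lemma to the connected component and spelling out why $([\beta],[I_n])$ is $\rightarrow$-terminal), but the argument is the same.
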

\begin{proof}
By Lemma \ref{dlemma} and Theorem \ref{diamond}, there is a unique minimal element in $\pP$ with respect to $\rightarrow$. Clearly that element is $([1_{\VB_n}],[I_n])$.

If $\beta$ were not trivial, it certainly wouldn't be a permutation since $\pi \cdot I_n \neq I_n$ unless $\pi = 1_{\VB_n}$. Thus $([\beta],[I_n])$ would be a minimal element in $\pP$ distinct from $([1_{\VB_n}],[I_n])$, a contradiction.
\end{proof}

\begin{remark}
The above theorems show that each orbit of $\VB_n$ acting on $\VCD_n$ contains a unique (up to the action of permutations) element of minimal complexity, and that the action on that element is faithful.
\end{remark}

\section{The map $\psi$ and its kernel}\label{fut}
Consider the action induced by the map $\psi:\VB_n \rightarrow \Aut(F_{n+1})$ given in \cite{B,M}, where $F_{n+1}$ is the free group on generators $x_1,x_2,\ldots x_n,q$. The formula for $\psi$ is:

\[ \psi(\sigma_i)(x_j)=\begin{cases}x_j & j \neq i,i+1 \\ x_i x_{i+1}x_i^{-1} & j=i \\ x_i & j=i+1 \end{cases}, \] 
\[ \psi(\tau_i)(x_j)=\begin{cases}x_j & j \neq i,i+1 \\ qx_{i+1}q^{-1} & j=i \\ q^{-1}x_iq & j=i+1 \end{cases}, \] and $\psi(\sigma_i)(q)=\psi(\tau_i)(q)=q$.

\begin{remark}\label{vcdcalc}
Given a virtual braid $\beta$, there is a procedure, described to the author independently by V. G. Bardakov, D. Bar-Natan, and B. A. Cisneros De La Cruz, to calculate the free group elements $\psi(\beta)(x_i)$, directly from the vcd for $\beta$. We now describe this procedure and then use it to verify that a particular virtual braid is in $\ker \psi$ for $n=4$.

The free group element $\psi(\beta)(x_i)$ is of the form $U_ix_{\pi(i)}U_i^{-1}$. To calculate $U_i$ from the vcd of $\beta$, we begin at the base point of the $i^{\rm th}$ curve, and travel along this curve to its terminal point, building up $U_i$ along the way. If the $i^{\rm th}$ curve enters a crossing and the second curve crosses from the right, append a $q$ to the end of $U_i$. If the second curve crosses from the left, append a $q^{-1}$ to the end of $U_i$. If the $i^{\rm th}$ curve goes clockwise (or counterclockwise) over the $j^{\rm th}$ terminal point (in the $<_P$-order), then append $x_j$ (or $x_j^{-1}$ if counterclockwise) to $U_i$.
\end{remark}

\begin{example}
One can determine, using Theorem \ref{diamond}, that the vcd in Figure \ref{exconv} comes from the braid $\tau_1 \tau_2 \sigma_2 \tau_2 \sigma_1 \tau_1$. Following the above recipe, the action of this braid on $(x_1,x_2,x_3)$ is \[(q^2x_2q^{-2},x_2qx_3q^{-1}x_2^{-1},x_3q^{-3}x_1q^3x_3^{-1}),\] which can be verified by direct calculation via the formula for $\psi$ as well.
\end{example}

\begin{figure}
\begin{center}
\includegraphics[scale=1.5]{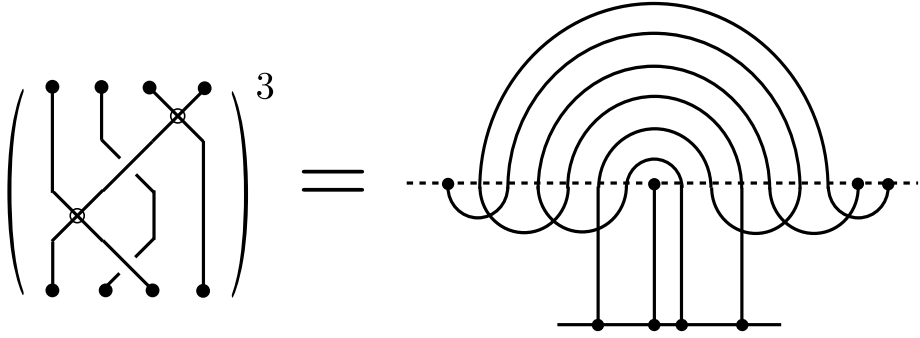}
\caption{The element $(\tau_3 \sigma_2 \tau_1 \sigma_2^{-1})^3$ of $\ker (\psi:\VB_4 \rightarrow \Aut(F_5))$.}\label{kern}
\end{center}
\end{figure}

With some trial and error, one may find a particular element of the kernel of $\psi$\index{kernel of $\psi$}:

\begin{proposition}
The virtual braid $\beta=(\tau_3 \sigma_2 \tau_1 \sigma_2^{-1})^3 \in \VB_4$ is in $\ker (\psi:\VB_4 \rightarrow \Aut(F_5))$.
\end{proposition}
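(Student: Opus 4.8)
The plan is to verify by direct computation that $\psi(\beta)$ is the identity automorphism of $F_5$. Since every generator $\sigma_i,\tau_i$ fixes $q$, we get $\psi(\beta)(q)=q$ immediately, so it suffices to check $\psi(\beta)(x_j)=x_j$ for $j=1,2,3,4$. Writing $\gamma=\tau_3\sigma_2\tau_1\sigma_2^{-1}$, so that $\beta=\gamma^3$ and hence $\psi(\beta)=\psi(\gamma)^3$, the first step is to compute the automorphism $g:=\psi(\gamma)$ on each $x_j$ by composing the four defining formulas in the order dictated by the convention of \cite{B,M}. This is pure bookkeeping; one finds that $g$ fixes $x_2$ (and $q$), while $g(x_1)=qx_2qx_4q^{-1}x_2^{-1}q^{-1}$, $g(x_3)=x_2^{-1}q^{-1}x_1qx_2$, and $g(x_4)=q^{-1}x_3q$. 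In other words $g$ is, up to conjugation, the $3$-cycle $(x_1\ x_4\ x_3)$ while fixing the remaining generators, which already makes $g^3=\mathrm{id}$ plausible.

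The second step is to iterate the substitution. Because $g$ fixes $x_2$ and $q$, plugging the formulas for $g$ into themselves to get $g^2$, and once more to get $g^3$, requires only free reduction of words in $x_1,x_2,x_3,x_4,q$ in which the conjugating prefixes (powers of $q$ and of $x_2$) telescope. Carrying this out one gets $g^2(x_1)=qx_2x_3x_2^{-1}q^{-1}$, $g^2(x_3)=qx_4q^{-1}$, $g^2(x_4)=q^{-1}x_2^{-1}q^{-1}x_1qx_2q$, and then a final round of substitution collapses each $g^3(x_j)$ back to $x_j$. Hence $\psi(\beta)=g^3$ is the identity, so $\beta\in\ker\psi$.

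As an independent cross-check one can avoid composing automorphisms entirely and instead read the braid coordinates of $\beta$ directly off its virtual curve diagram in Figure~\ref{kern} using the traversal recipe of Remark~\ref{vcdcalc}; the recipe returns $(x_1,x_2,x_3,x_4)$ with trivial underlying permutation, which is the same assertion. The only thing to be careful about is arithmetic hygiene: keeping the orientation convention for the $q^{\pm1}$ factors consistent, composing the generator maps in the correct order, and not dropping any of the cancellations across the three rounds of substitution. There is no conceptual obstacle here — the entire content of the proposition lies in this (somewhat delicate) free-group computation.
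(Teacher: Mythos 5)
Your proposal is correct and is essentially the paper's own argument: the paper likewise proves this by direct verification (via the curve-diagram traversal of Remark \ref{vcdcalc} and a computer check), and your hand computation of $g=\psi(\tau_3\sigma_2\tau_1\sigma_2^{-1})$ and of $g^2$, $g^3$ checks out, filling in the calculation the paper leaves implicit.
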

\begin{proof}
Figure \ref{kern} depicts $\beta$ and its non-trivial vcd. Note that the braid word read left to right translates to the braid diagram read top to bottom. Using the procedure in Remark \ref{vcdcalc}, one can quickly verify that the action of $\psi(\beta)$ is indeed trivial. Alternatively, this can and should be (and was) checked with the aid of a computer.
\end{proof}

As far as the author can tell, it remains possible that $\psi$ is injective for $n=3$. The set of finite type invariants of pure $3$-strand virtual braids is known to form a complete invariant, as shown in \cite{BMVW}. This would also follow from the injectivity of $\psi$ for $n=3$.

\end{document}